\newtheorem*{thm-plain}{Theorem}
\newtheorem{thm}{Theorem}[section]
\newtheorem{lem}[thm]{Lemma}
\newtheorem{prp}[thm]{Proposition}
\newtheorem{cor}[thm]{Corollary}
\numberwithin{equation}{thm}
\theoremstyle{definition}
\newtheorem{dfn}[thm]{Definition}
\newtheorem*{dfn-plain}{Definition}
\theoremstyle{remark}
\newtheorem{clm}[thm]{Claim}
\newtheorem{rem}[thm]{Remark}
\newtheorem{exm}[thm]{Example}
\newtheorem*{rem-plain}{Remark}
\DeclareMathOperator{\Spec}{Spec}
\DeclareMathOperator{\Aut}{Aut}
\DeclareMathOperator{\Pic}{Pic}
\def\rd#1.{\lfloor{#1}\rfloor}
\def\rp#1.{\lceil{#1}\rceil}
\def\tw#1.{\langle{#1}\rangle}
\newcommand{\lto}{\longrightarrow}
\newcommand{\N}{\mathbb N}
\newcommand{\Z}{\mathbb Z}
\newcommand{\Q}{\ensuremath{\mathbb Q}}
\newcommand{\R}{\mathbb R}
\newcommand{\C}{\mathbb C}
\renewcommand{\P}{\mathbb P}
\renewcommand{\O}{\mathscr O}
\renewcommand{\phi}{\varphi}
\renewcommand{\theta}{\vartheta}
\newcommand{\minus}{\setminus}%{\backslash}
\newcommand{\inj}{\hookrightarrow}
\newcommand{\surj}{\twoheadrightarrow}
\newcommand{\isom}{\cong}
\newcommand{\mf}{\mathfrak}
\newcommand{\tensor}{\otimes}
\newcommand{\sg}{\mathrm{sg}}
\newcommand{\reg}{\mathrm{reg}}
\newcommand{\gr}{\mathrm{gr}}
\newcommand{\norm}{_\mathrm{norm}}
\newcommand{\Nef}{\mathrm{Nef}}
\newcommand{\NE}{\overline{\mathrm{NE}}}
\newcommand{\mxsc}{\mathrm{max,sc}}
\newcommand{\sE}{\mathscr{E}}
\newcommand{\sF}{\mathscr{F}}
\newcommand{\sG}{\mathscr{G}}
\newcommand{\sI}{\mathscr{I}}
\newcommand{\sL}{\mathscr{L}}
\newcommand{\sQ}{\mathscr{Q}}
\newcommand{\cQ}{\mathcal{Q}}
\newcommand{\sT}{\mathscr{T}}
\newcommand{\m}{\mathbf m}
\newcommand{\eps}{\varepsilon}
\newcommand{\wt}{\widetilde}
\DeclareMathOperator{\rk}{rk}
\newenvironment{sequation}{%
\setcounter{equation}{\value{thm}}%
\numberwithin{equation}{section}%
\begin{equation}%
}{%
\end{equation}%
\numberwithin{equation}{thm}%
\addtocounter{thm}{1}%
}
\numberwithin{equation}{thm}
\DeclareRobustCommand{\SkipTocEntry}[5]{}
\newcommand{\iref}[3]{\the\value{#1}.\the\value{#2}(\the\value{#3})}
\newcommand\factor[2]{\left. \raise 2pt\hbox{$#1$} \right/\hskip -2pt \raise -2pt\hbox{$#2$}}
\definecolor{forrest}{RGB}{81,133,49}
\definecolor{mydarkblue}{RGB}{10,92,153}
\begin{document}

\title[A Mehta--Ramanathan theorem]{A Mehta--Ramanathan theorem for linear systems with basepoints}
\author{Patrick Graf} %
\address{Lehrstuhl f\"ur Mathematik I, Universit\"at Bay\-reuth,
  95440 Bayreuth, Germany} %
\email{\href{mailto:patrick.graf@uni-bayreuth.de}{patrick.graf@uni-bayreuth.de}}
\date{October 20, 2015}
\thanks{The author was supported in full by a research grant of the
  Deutsche Forschungs\-gemeinschaft (DFG)}  %
\keywords{Semistable sheaves, Mehta--Ramanathan theorem, complete intersection curves, generic semipositivity} %
\subjclass[2010]{14F05, 14H60, 14J60}

\begin{abstract}
Let $(X, H)$ be a normal complex projective polarized variety and $\sE$ an $H$-semistable sheaf on $X$. We prove that the restriction $\sE|_C$ to a sufficiently positive general complete intersection curve $C \subset X$ passing through a prescribed finite set of points $S \subset X$ remains semistable, provided that at each $p \in S$, the variety $X$ is smooth and the factors of a Jordan--H\"older filtration of $\sE$ are locally free.
As an application, we obtain a generalization of Miyaoka's generic semipositivity theorem.
\end{abstract}

\maketitle

\section{Introduction}

In the theory of vector bundles on a normal complex projective variety~$X$, the concept of (semi-)stability is of great importance. There are various notions of stability, e.g.~slope stability and Gieseker stability.
In this paper, unless stated otherwise we always consider slope stability.
Its definition depends on the choice of a polarization, i.e.~on the choice of an ample divisor $H$ on $X$. One associates to any nonzero torsion-free coherent sheaf $\sE$ its slope
\[ \mu_H(\sE) := \frac{c_1(\sE) \cdot H^{n-1}}{\rk \sE}, \]
where $n = \dim X$. The sheaf $\sE$ is then said to be $H$-semistable if $\mu_H(\sF) \le \mu_H(\sE)$ for all proper nonzero subsheaves $\sF \subsetneq \sE$. For the notion of $H$-stability, one replaces ``$\le$'' by ``$<$'' in the above inequality.
The definition of Gieseker stability is similar, with slopes replaced by (reduced) Hilbert polynomials.

An important technical property of semistability is its invariance under restriction to general complete intersection curves $C \subset X$. More precisely, the classical theorem of Mehta and Ramanathan \cite[Thm.~6.1]{MR82} asserts that the restriction of an $H$-semistable sheaf $\sE$ on $X$ to a curve $C \subset X$ obtained as the intersection $C = D_1 \cap \cdots \cap D_{n-1}$ of general elements $D_i \in |m_i H|$ remains semistable if the $m_i$ are chosen large enough.
Actually,~\cite[Thm.~6.1]{MR82} only works if $X$ is smooth, but a stronger version valid for arbitrary normal varieties was obtained by Flenner~\cite[Thm.~1.2]{Fle84}.
These theorems sometimes allow to reduce questions about sheaves on higher-dimensional varieties $X$ to the curve or surface case by a cutting-down procedure.

In geometric applications, one might wish to concentrate attention near a prescribed (closed) point $p \in X$, i.e.~one might ask whether the restriction $\sE|_C$ to a sufficiently positive general complete intersection curve $C \subset X$ \emph{passing through $p$} is semistable.
This means that instead of the complete linear system $|m_i H|$, one only considers the subsystem consisting of divisors passing through $p$.
The Mehta--Ramanathan theorem does not provide any information about this question, as a general complete intersection curve misses the point $p$.

It is relatively easy to see that the above question has to be answered in the negative if $p$ is in a ``bad'' position relative to the sheaf $\sE$, cf.~Example~\ref{exm:example} below. This can happen even if $\sE$ is locally free at $p$.
The purpose of this paper is to show that if the point $p$ is ``in general position'', the restriction $\sE|_C$ does remain semistable. In fact, we prove a slightly more general statement where the single point $p$ is replaced by an arbitrary finite subset $S \subset X$.

The precise meaning of the general position condition in this context involves the graded object $\gr_H(\sE)$ associated to a Jordan--H\"older filtration of $\sE$. Recall that any torsion-free sheaf $\sE$ admits a filtration
\[ 0 = \sE_0 \subset \sE_1 \subset \cdots \subset \sE_\ell = \sE \]
where each quotient $\cQ_i := \factor{\sE_i}{\sE_{i-1}}$ is torsion-free and $H$-stable, and the sequence of slopes $\mu_H(\cQ_i)$ is decreasing. Such a filtration is called a Jordan--H\"older filtration of $\sE$. It is not unique, however the associated graded object $\gr_H(\sE) := \bigoplus_{i=1}^\ell \cQ_i$ is unique up to graded isomorphism and permutation of the factors.

With this notation, our main result can be stated as follows.

\begin{thm}[Mehta--Ramanathan for linear systems with basepoints] \label{thm:MR}
Let $(X, H)$ be a normal complex projective polarized variety of dimension $n \ge 2$.
Fix a torsion-free coherent sheaf $\sE$ on $X$ and a finite subset $S \subset X_\reg$, where $X_\reg$ is the smooth locus of $X$.

If $\sE$ is $H$-semistable and $\gr_H(\sE)$ is locally free at each $p \in S$, then there exist $k_0, m \in \N$ such that if $k \ge k_0$ and
\[ C = H_1 \cap \cdots \cap H_{n-1}, \quad H_i \in |kmH|, \]
is a complete intersection curve general among those passing through the set $S$, then $C$ is smooth and the restriction $\sE|_C$ is a semistable vector bundle.
\end{thm}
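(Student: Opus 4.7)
The plan is to reduce the assertion to the classical Mehta--Ramanathan/Flenner theorem on the blow-up $\pi\colon \wt X \to X$ of $X$ at the finite set $S$, equipped with a twisted polarization. Since $S \subset X_{\reg}$, the exceptional divisor is $E = \sum_{p \in S} E_p$ with $E_p \cong \P^{n-1}$, and $\wt X$ is again normal. First I fix $m \in \N$ large enough that $m\pi^*H - E$ is very ample on $\wt X$; then $\wt L_k := km\pi^*H - E$ is ample on $\wt X$ for every $k \ge 1$, and the linear system $|\wt L_k|$ corresponds via $\pi$ precisely to the divisors in $|kmH|$ on $X$ of multiplicity at least one at every point of $S$. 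Bertini then ensures that for $k \gg 0$, a general complete intersection $\wt C = \wt H_1 \cap \cdots \cap \wt H_{n-1}$, $\wt H_i \in |\wt L_k|$, is a smooth irreducible curve in $\wt X_{\reg}$ that meets each $E_p$ transversally and lies in the locus where $\pi^*\sE$ is locally free; its image $C = \pi(\wt C)$ is then the desired smooth curve in $X$ through $S$, with $\wt C \cong C$ and $(\pi^*\sE)|_{\wt C} \cong \sE|_C$.

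Next I observe that the hypothesis that $\gr_H(\sE)$ is locally free at $S$ propagates through any Jordan--H\"older filtration $0 = \sE_0 \subset \cdots \subset \sE_\ell = \sE$: an induction on $i$ shows that each $\sE_i$ and each quotient $\sE/\sE_i$ is locally free at $S$, so every $\sE_i \hookrightarrow \sE$ is a local direct summand near $S$. Moreover, any saturated subsheaf $\sG \subset \sE$ with $\mu_H(\sG) = \mu_H(\sE)$ may be inserted into some Jordan--H\"older filtration, so $\sG$ and $\sE/\sG$ are likewise locally free at $S$ and $\sG \hookrightarrow \sE$ is a subbundle in a neighborhood of $S$.

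The heart of the proof is the claim that $\wt\sE := \pi^*\sE$ is $\wt L_k$-semistable for all sufficiently large $k$. For a saturated subsheaf $\sF \subset \wt\sE$, set $\sG := (\pi_*\sF)^{**} \subset \sE$; since $\sF$ and $\pi^*\sG$ agree outside $E$, one writes $c_1(\sF) = \pi^*c_1(\sG) + \sum_{p \in S} a_p E_p$ with $a_p \in \Z$. The identities $\pi^*\alpha \cdot E_p = 0$ for any divisor class $\alpha$ on $X$ and $E_p \cdot \wt L_k^{n-1} = 1$ (both arising because $E_p \cong \P^{n-1}$ maps to a point in $X$ and $\wt L_k|_{E_p} \cong \O_{\P^{n-1}}(1)$) yield
\[
\mu_{\wt L_k}(\sF) \;=\; (km)^{n-1}\mu_H(\sG) \;+\; \frac{1}{\rk \sF}\sum_{p \in S} a_p, \qquad \mu_{\wt L_k}(\wt\sE) \;=\; (km)^{n-1}\mu_H(\sE).
\]
If $\mu_H(\sG) = \mu_H(\sE)$, the previous paragraph shows $\pi^*\sG$ is already saturated in $\wt\sE$, whence $\sF = \pi^*\sG$ by uniqueness of saturation and all $a_p = 0$; if $\mu_H(\sG) < \mu_H(\sE)$, Grothendieck's boundedness confines $\sG$ to a bounded family and thereby produces a uniform bound on $\sum_p a_p$, so the dominant $(km)^{n-1}$ term forces $\mu_{\wt L_k}(\sF) < \mu_{\wt L_k}(\wt\sE)$ once $k$ is large. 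In either case $\wt\sE$ is $\wt L_k$-semistable for $k \gg 0$.

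Given this, Flenner's version of the Mehta--Ramanathan restriction theorem applies on $\wt X$ with the ample polarization $\wt L_k$: since $\wt L_k^n \sim (km)^n H^n \to \infty$, for $k$ sufficiently large the numerical hypothesis of Flenner's theorem is met, and iterating the restriction theorem $n-1$ times yields the semistability of $(\pi^*\sE)|_{\wt C} \cong \sE|_C$ for a general complete intersection $\wt C$ of $n-1$ members of $|\wt L_k|$. The main technical obstacle I anticipate is the uniform bound on $\sum_p a_p$ in the case $\mu_H(\sG) < \mu_H(\sE)$; this requires Grothendieck's boundedness combined with a careful analysis of how pullback and saturation modify the first Chern class near the exceptional locus.
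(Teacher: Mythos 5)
Your overall skeleton (blow up $S$, perturb the polarization, prove semistability of the pullback, then restrict on $\wt X$) is the same as the paper's, but the final step contains a fatal error. Flenner's restriction theorem, for a \emph{fixed} polarization $A$ on $\wt X$, gives semistability of restrictions to general complete intersections of divisors in $|dA|$ once $d$ satisfies $\bigl(\binom{d+n}{d}-d-1\bigr)/d > \deg_A(\wt X)\cdot\max\bigl(\tfrac{r^2-1}{4},1\bigr)$; the left side must dominate the \emph{degree} of the polarization. You instead apply the theorem with $d=1$ to the varying polarization $\wt L_k = km\pi^*H - E$ and argue that the hypothesis is met ``since $\wt L_k^n \to \infty$'' --- this is exactly backwards: as $\wt L_k^n \to\infty$ the hypothesis \emph{fails}. (An effective statement of the kind you want is Langer's, but it requires $\wt X$ smooth, which fails whenever $X$ is singular --- and that is precisely the case of interest here.) Applying Flenner correctly, with fixed polarization $m\pi^*H - E$ and $d = k \gg 0$, produces complete intersections of divisors in $|km\pi^*H - kE|$, whose images in $X$ have multiplicity $k$ at every point of $S$ and are therefore \emph{singular}. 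One then only knows that the pullback of $\sE|_C$ to the normalization $\wt C$ is semistable, and it is genuinely unclear whether this forces $\sE|_C$ itself to be semistable (saturated subsheaves of a locally free sheaf on a singular curve need not be locally free). The paper devotes substantial machinery to bridging this gap: the numerical criterion ``semistable $\Leftrightarrow$ normalized bundle nef'' on smooth curves, invariance of nefness under pullback, weak openness of nefness in the incidence family of complete intersection curves through $S$, and finally openness of semistability. Your proposal omits this entirely, and the smooth curve you claim to obtain directly from Bertini in $|\wt L_k|$ is not one to which any available restriction theorem applies.

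A secondary gap: in proving $\wt L_k$-semistability of $\pi^*\sE$, your case $\mu_H(\sG) < \mu_H(\sE)$ needs a bound on $\sum_p a_p$ that is uniform over \emph{all} saturated $\sF \subset \pi^*\sE$. Grothendieck boundedness does not supply this as stated: the family of all subsheaves $\sG \subset \sE$ of smaller slope is unbounded (slopes can be arbitrarily negative), and $a_p$ depends on $\sF$, not only on $\sG = (\pi_*\sF)^{**}$. The paper sidesteps boundedness altogether: it shows $\mu_\eps(\sF) = \mu_0(\sF) + C(\sF)\,\eps^{n-1}$ for $H_\eps = f^*H - \eps E$, observes that the suprema $\mu_0^\mxsc$ and $\mu_1^\mxsc$ of slopes of strict subsheaves are finite (attained maxima), and uses an elementary interpolation lemma for polynomials of the form $a_0 + a_{n-1}x^{n-1}$ to get a uniform bound on $[0,1]$; it then reduces the semistable case to the stable one by induction on the length of a Jordan--H\"older filtration. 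Your direct case split on $\mu_H(\sG)$ is a reasonable alternative route for this step, but as written it does not close.
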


As a corollary, we have the following strengthening of Miyaoka's famous generic semipositivity theorem~\cite[Cor.~8.6]{Miy87}, relating the positivity of the cotangent sheaf of a normal variety $X$ to the paucity of rational curves on $X$.
By $\sT_X$ we mean the tangent sheaf of $X$, i.e.~the dual of the sheaf of K\"ahler differentials $\Omega_X^1$.

\begin{cor}[Generic semipositivity] \label{cor:gsp}
Let $X$ be a normal complex projective variety of dimension $n \ge 2$ which is not uniruled. Let $H$ be an ample divisor on $X$ and $S \subset X_\reg$ a finite set of smooth points such that $\gr_H(\sT_X)$ is locally free at each $p \in S$.
Then there exist $k_0, m \in \N$ such that if $k \ge k_0$ and
\[ C = H_1 \cap \cdots \cap H_{n-1}, \quad H_i \in |kmH|, \]
is a complete intersection curve general among those passing through the set $S$, then $C$ is smooth, contained in $X_\reg$, and $\Omega_X^1\big|_C$ is a nef vector bundle.
\end{cor}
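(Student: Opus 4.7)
My plan is to apply Theorem~\ref{thm:MR} piece by piece to the Harder--Narasimhan filtration of $\sT_X$ and then dualize on the restriction curve, with Miyaoka's generic semipositivity providing the sign of the slopes.

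First I take the HN filtration $0 = \sT^{(0)} \subset \cdots \subset \sT^{(r)} = \sT_X$ with semistable graded pieces $\sG_i := \sT^{(i)}/\sT^{(i-1)}$ of strictly decreasing slopes. A Jordan--H\"older filtration of $\sT_X$ in the sense of the introduction refines this HN filtration, so each $\gr_H(\sG_i)$ is the direct sum of precisely those stable factors of $\gr_H(\sT_X)$ that belong to the $i$-th HN piece. Hence $\gr_H(\sG_i)$ is locally free at every $p \in S$, so the hypothesis of Theorem~\ref{thm:MR} is satisfied for each $\sG_i$. Since $X$ is not uniruled, Miyaoka's generic semipositivity theorem~\cite[Cor.~8.6]{Miy87} yields $\mu_H(\sG_i) \le 0$ for every $i$.

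Next I apply Theorem~\ref{thm:MR} to each of the finitely many sheaves $\sG_i$, take a pair $(k_0, m)$ valid for all of them simultaneously, and enlarge $k_0$ further so that for $k \ge k_0$ the linear subsystem of divisors in $|kmH|$ passing through $S$ is basepoint-free outside $S$. Let $U \subseteq X$ be the open subset where $X$ is smooth and every stable JH factor $\cQ_j$ of $\sT_X$ is locally free; by hypothesis $S \subseteq U$. The complement $Z := X \setminus U$ has codimension at least two in $X$: the singular locus is of codimension $\ge 2$ by normality, and each torsion-free sheaf $\cQ_j$ is automatically locally free at the codimension-one points of the normal variety $X$. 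Iterated Bertini applied to the above subsystem therefore forces a general complete intersection curve through $S$ to avoid $Z$ entirely, i.e.\ $C \subseteq U$. An inductive argument on the refined filtration (using short exact sequences and the fact that extensions of locally free sheaves are locally free) shows that each $\sT^{(i)}$ is locally free on $U$, so after restriction to $C$ the HN filtration of $\sT_X$ becomes a filtration of $\sT_X|_C$ by vector subbundles whose quotients are the semistable vector bundles $\sG_i|_C$ of slope $(km)^{n-1}\mu_H(\sG_i) \le 0$.

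Finally, I dualize this restricted filtration, now a filtration of vector bundles on the smooth curve $C$, to obtain a filtration of $\Omega_X^1|_C = (\sT_X|_C)^*$ whose graded pieces are the duals $(\sG_i|_C)^*$, each semistable of non-negative slope. On a smooth projective curve a semistable vector bundle of non-negative slope is nef, and an extension of nef vector bundles is nef, so $\Omega_X^1|_C$ is nef, which is the required conclusion.

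I expect the main technical point to be the Bertini step in the second paragraph: arranging that a general CI curve \emph{through a prescribed finite set} $S$ misses the entire codimension-two locus $Z$ where the HN filtration of $\sT_X$ fails to consist of subbundles. The hypothesis that $\gr_H(\sT_X)$ be locally free at $S$, together with normality of $X$, is precisely what ensures that $Z$ has codimension at least two and hence is avoidable by this passing-through-$S$ version of Bertini.
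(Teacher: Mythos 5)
Your proof is correct, and it opens exactly as the paper's does: restrict the Harder--Narasimhan filtration of $\sT_X$, use $\gr_H(\sT_X) = \bigoplus_i \gr_H(\sQ_i)$ to check the local freeness hypothesis for each graded piece, and apply Theorem~\ref{thm:MR} to each $\sQ_i$. The endgame, however, runs in the opposite logical direction. The paper argues by contradiction: if $\Omega_X^1|_C$ were not nef, it would admit a locally free quotient of negative degree \cite[Thm.~6.4.15]{Laz04b}, dually a positive-degree subbundle of $\sT_X|_C$; since the restricted filtration is identified with the HN filtration of $\sT_X|_C$, this forces $c_1(\sF_1)\cdot H^{n-1}>0$ and hence uniruledness by \cite[Thm.~9.0.2]{Kol92}. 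You instead feed non-uniruledness in at the start to get $\mu_H(\sQ_i)\le 0$ for all $i$, and then assemble nefness of $\Omega_X^1|_C$ directly from two standard facts: a semistable bundle of non-negative degree on a smooth curve is nef, and an extension of nef bundles is nef. The two routes invoke the same uniruledness criterion in contrapositive forms; if you are worried about quoting \cite[Cor.~8.6]{Miy87} for singular $X$ (or about circularity, since the corollary generalizes that very statement), note that ``$\mu_H(\sQ_1)>0$ implies $X$ uniruled'' is precisely \cite[Thm.~9.0.2]{Kol92}, which yields your slope inequality with no circularity. Your version has the small advantage of not needing the restricted filtration to \emph{be} the HN filtration of $\sT_X|_C$ --- any filtration by subbundles whose dual quotients are nef suffices --- at the cost of spelling out the Bertini/codimension-two avoidance step, which the paper has already packaged into the set $U_\m$ of Proposition~\ref{prp:qm}.
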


We make some remarks about Theorem~\ref{thm:MR} and Corollary~\ref{cor:gsp}.

\begin{rem} \label{rem:377}
If $\gr_H(\sE)$ is locally free at a point $x \in X$, then so is $\sE$ itself. This follows immediately from the fact that locally near $x$, a Jordan--H\"older filtration exhibits $\sE$ as a successive extension of free sheaves.

Furthermore, $\gr_H(\sE)$ is locally free in codimension one because it is torsion-free. Hence the locus of points which in Theorem~\ref{thm:MR} are not allowed to be contained in $S$ has codimension at least two.
\end{rem}

\begin{rem}
In Corollary~\ref{cor:gsp}, if $X$ is in addition homogeneous, then $X$ is smooth and $\gr_H(\sT_X)$ is locally free.
The reason is that $g^* \gr_H(\sT_X) \isom \gr_H(\sT_X)$ for any $g \in \Aut^0(X)$, the connected component of the identity.
Hence $S \subset X$ may be any finite subset.
Note that by~\cite[Satz I]{BR62}, a homogeneous projective manifold which is not uniruled is in fact an abelian variety.
\end{rem}

\begin{rem}
Theorem~\ref{thm:MR} does not yield semistability of $\sE|_C$ for general complete intersection curves $S \subset C$ arising from \emph{all} sufficiently high multiples of $H$. The best one can get from the proof is the following statement, which is however somewhat cumbersome to formulate: for all sufficiently large $m \in \N$, there exists $k_0 = k_0(m)$, depending on $m$, such that $\sE|_C$ is semistable for $S \subset C$ arising from $|kmH|$ as soon as $k \ge k_0$.
\end{rem}

\begin{rem}
The proof of Theorem~\ref{thm:MR} only works over an uncountable algebraically closed field of characteristic zero. However, as the property of being semistable is invariant under base field extension~\cite[Thm.~1.3.7]{HL97}, the uncountability assumption is not essential. In positive characteristic, assuming $X$ is smooth we can obtain the following slightly weaker statement if we replace~\cite[Thm.~1.2]{Fle84}, which is specific to characteristic zero, by~\cite[Thm.~6.1]{MR82}:
There exist $k_0, m \in \N$ such that if $k \ge k_0$ and $S \subset C \subset X$ is a general complete intersection of divisors in $|2^kmH|$, then $\sE|_C$ is semistable.
\end{rem}

\begin{rem}
If the set $S$ contains a singular point $p \in X_\sg$, then no complete intersection curve $C \subset X$ will ever be smooth at $p$. One might still ask whether the pullback of $\sE$ to the normalization $\wt C$ is semistable.
\end{rem}

\begin{rem}
If the variety $X$ is smooth, Theorem~\ref{thm:MR} follows immediately from a result of Langer~\cite[Cor.~5.4]{Lan04}.
In fact, in this case Langer proves a much stronger, effective statement.
Hence the point of Theorem~\ref{thm:MR} really is that $X$ is allowed to be singular.
In the proof, we blow up only the prescribed finite set $S \subset X_\reg$, and we apply Flenner's restriction theorem on the blowup $\mathrm{Bl}_S X$ (which is still singular if $X$ is).
If instead we had chosen to pass to a resolution $\wt X$ of $X$ and use Langer's result on $\wt X$, the technical difficulties would have been the same.
\end{rem}

\begin{exm} \label{exm:example}
If $\gr_H(\sE)$ is not locally free at some $p \in S$, the conclusion of Theorem~\ref{thm:MR} can easily fail.
In fact, we will provide here an example of a smooth projective polarized surface $(X, H)$, a \emph{Gieseker stable} (hence slope semistable) rank $2$ vector bundle $\sE$ on $X$, and a point $p \in S$, such that for any $m > 0$ the restriction of $\sE$ to any smooth curve $C \in |mH|$ passing through $p$ is not semistable.
This gives another instance of the fact that Gieseker stability is much worse behaved than slope stability, since for a slope stable sheaf the restriction would remain stable.

Take $X$ to be a smooth projective surface of Picard number $\rho(X) \ge 2$ and with $K_X$ numerically trivial.
Pick an ample divisor $H$ on $X$ and a divisor $D$ such that $D \cdot H = 0$, but $D$ is not numerically trivial.
By Riemann--Roch and the Hodge Index Theorem, the self-intersection $D^2$ is even and negative.
Replacing $D$ by a multiple, we may assume $D^2 \le -4$.
%Let $Z \subset X$ be a locally complete intersection subscheme of length $\ell(Z) = -\frac 12 D^2$, and
Let $p \in X$ be any (reduced) point.
We have $H^0(X, \O_X(K_X - D)) = 0$, hence by the Serre correspondence~\cite[Thm.~5.1.1]{HL97} there exists an extension
\begin{sequation} \label{eqn:ses}
0 \lto \O_X(D) \lto \sE \lto \sI_p \lto 0,
\end{sequation}%
where $\sE$ is a rank two vector bundle and $\sI_p$ is the ideal sheaf of $p \in X$.

We will show that $\sE$ is Gieseker stable. On the other hand, $\sE$ is obviously not slope stable and the filtration given by~\eqref{eqn:ses} is a Jordan--H\"older filtration for $\sE$, so $\gr_H(\sE) = \O_X(D) \oplus \sI_p$ is not locally free at $p$.

Via Riemann--Roch, we may compute the reduced Hilbert polynomials (with respect to $H$) of the sheaves appearing in~\eqref{eqn:ses}:
\begin{align*}
p \big( \O_X(D), n \big) & = \frac 12 n^2 + \frac 1{H^2} \left( \frac 12 D^2 + \chi(\O_X) \right), \\
p(\sI_p, n)              & = \frac 12 n^2 + \frac 1{H^2} \big(         \! -1 + \chi(\O_X) \big), \\
p(\sE)                   & = \frac 12 \Big( p \big( \O_X(D) \big) + p(\sI_p) \Big).
%p(\sE, n)                & = \frac 12 \Big( p \big( \O_X(D), n \big) + p(\sI_p, n) \Big).
\end{align*}
Now let $0 \neq \sF \subsetneq \sE$ be a proper nonzero subsheaf, which we may assume to be saturated. Then $\sF$ is invertible, say $\sF \isom \O_X(F)$. If the inclusion of $\sF$ factors through $\O_X(D)$, then the reduced Hilbert polynomial $p(\sF) \le p \big( \O_X(D) \big) < p(\sE)$. Otherwise, $\sF$ injects into $\sI_p$, which means that $H^0(X, \sF^* \tensor \sI_p) \ne 0$. Then $-F$ is (linearly equivalent to) a nonzero effective divisor, and $F \cdot H < 0$. Since $\frac{F \cdot H}{H^2}$ is the coefficient of the linear term in $p(\sF)$, again we have $p(\sF) < p(\sE)$. Hence $\sE$ is Gieseker stable.

It remains to show the statement about the restrictions of $\sE$.
For any $m > 0$, restriction of~\eqref{eqn:ses} to a smooth curve $C \in |mH|$ passing through $p$ yields a short exact sequence
\begin{sequation} \label{eqn:new ses}
0 \lto \O_C(D) \lto \sE|_C \lto \sI_p|_C \lto 0.
\end{sequation}%
The right-hand side sheaf has torsion at $p$ because $\sI_p$ is not locally free in $p$, cf.~\cite[Lemma~2.1.7]{HL97}. The left-hand side subsheaf is thus not saturated. Its saturation $\sG \subset \sE|_C$ satisfies
\[ \mu(\sG) > \mu(\O_C(D)) = 0 = \mu(\sE|_C), \]
whence $\sE|_C$ is not semistable.
\end{exm}

\subsection*{Outline of proof of Theorem~\ref{thm:MR}}

The natural idea for proving Theorem~\ref{thm:MR} is to pass to the blowup $f\!: \wt X \to X$ of $X$ in $S$ and apply the usual Mehta--Ramanathan theorem there. So first we have to show that the pullback $f^* \sE$ is $f^* H$-semistable. Here we already run into a slight problem, as the divisor $f^* H$ is semiample, but never ample (unless $S$ is empty). However, semistability can also be defined with respect to a semiample divisor, even if the underlying space is not \Q-factorial (see Section~\ref{sec:notations}).

Let $E$ be the exceptional divisor of $f$. As is well-known, a small perturbation of the form $f^* H - \eps E$ will be ample. Hence our goal is to show that $f^* \sE$ remains $(f^* H - \eps E)$-semistable for sufficiently small $\eps > 0$.
We establish this property in two steps. First we prove that if $\sE$ is $H$-stable, then $f^* \sE$ is $(f^* H - \eps E)$-stable for $0 < \eps \ll 1$. This is done by analyzing how the slopes of subsheaves of $f^* \sE$ depend on $\eps$. In the second step, we reduce to the stable case by considering a Jordan--H\"older filtration of $\sE$.

Once this is accomplished, we can apply the Mehta--Ramanathan theorem on $(\wt X, f^* H - \eps E)$. This yields a lot of (highly singular) complete intersection curves $S \subset C \subset X$ such that the pullback of $\sE|_C$ to the normalization $\wt C \xrightarrow\nu C$ is semistable. We would like to infer from this that $\sE|_C$ itself is semistable. Then semistability of $\sE|_{C'}$, where $C'$ is a general complete intersection curve through $S$, would follow by the openness of semistability in families~\cite[Prp.~2.3.1]{HL97}.
Here we need to consider Gieseker semistability on $C$, as it is not clear what the correct definition of slope semistability on a singular curve would be, and because~\cite[Prp.~2.3.1]{HL97} is valid only for Gieseker semistability.

Unfortunately, we are unable to show that semistability of $\nu^*(\sE|_C)$ really implies semistability of $\sE|_C$. The required condition on Hilbert polynomials is easy enough to check for \emph{locally free} subsheaves $\sF \subset \sE|_C$, however on a singular curve a saturated subsheaf of a locally free sheaf need not be locally free.
For example, consider the nodal curve singularity $C = \{ xy = 0 \}$, and let $\mf m = (x, y)$ be the ideal of the singular point. This choice of generators yields a short exact sequence
\[ 0 \lto \mf m \lto \O_C^{\oplus 2} \lto \mf m \lto 0. \]

Instead we use the numerical characterization of semistability, asserting that a degree zero vector bundle on a smooth curve is semistable if and only if it is nef.
The notion of nefness is very well-behaved under pullbacks and it also satisfies a sufficient openness property in families. This enables us to circumvent the technical difficulties outlined above.

\subsection*{Acknowledgements}

I would like to thank Thomas Peternell for suggesting working on this topic to me.
This paper has benefited from several discussions with him and with Matei Toma.
In particular, Matei Toma drew my attention to the paper~\cite{Lan04}.
Furthermore, I would like to thank the anonymous referee for helping me to improve the exposition and for asking me a question that led to a significant strengthening of Example~\ref{exm:example}.

\section{Notations and conventions} \label{sec:notations}

\subsection*{Global assumptions}

We work over the field of complex numbers $\C$.
All sheaves are assumed to be coherent.
Unless otherwise stated, divisors are assumed to have integer coefficients.

\subsection*{Projective bundles}

If $\sE$ is a vector bundle on a variety $X$, we denote by $\pi\!: \P(\sE) \to X$ the projective bundle of one-dimensional quotients of $\sE$ and by $\xi_\sE \in N^1(\P(\sE))_\Q$ the (first Chern class of the) tautological quotient bundle $\pi^*\sE \surj \O_{\P(\sE)}(1)$.

\subsection*{Stability with respect to a semiample divisor} \label{sec:stab semiample}

For a torsion-free sheaf $\sE$ on a normal projective variety $X$, the first Chern class $c_1(\sE)$ is by definition a Weil divisor class $D$ on $X$, which will in general not be \Q-Cartier. Nevertheless, the intersection number $c_1(\sE) \cdot H^{n-1} = D \cdot H^{n-1}$ can be defined for any ample (or merely semiample) divisor $H$, cf.~\cite[Sec.~1]{Mar81}. Namely, after passing to a multiple of $H$ we may assume that the linear system $|H|$ is basepoint-free. If $H_1, \dots, H_{n-1} \in |H|$ are general elements, the intersection $C = H_1 \cap \cdots \cap H_{n-1} \subset X$ is a smooth curve missing the singular locus of $X$. Hence $D$ is a Cartier divisor in a neighborhood of $C$. We may thus define $D \cdot H^{n-1}$ to be $D \cdot C$.
Alternatively, choose a resolution $f\!: \wt X \to X$ and a (Cartier) divisor $\wt D$ on $\wt X$ with $f_* \wt D = D$, e.g.~the strict transform $\wt D = f^{-1}_* D$. Then $D \cdot H^{n-1} = \wt D \cdot f^* H^{n-1}$.
These definitions are independent of all the choices made.

We can now define stability and semistability on a normal projective variety $X$ with respect to a semiample divisor $H$.
For a nonzero torsion-free coherent sheaf $\sE$ on $X$, its slope with respect to $H$ is defined by
\[ \mu_H(\sE) := \frac{c_1(\sE) \cdot H^{n-1}}{\rk \sE}. \]
The sheaf $\sE$ is said to be $H$-semistable if $\mu_H(\sF) \le \mu_H(\sE)$ for any nonzero subsheaf $\sF \subset \sE$.
The sheaf $\sE$ is said to be $H$-stable if $\mu_H(\sF) < \mu_H(\sE)$ for any nonzero subsheaf $\sF \subset \sE$ with $\rk \sF < \rk \sE$.
Note that it would be incorrect to require strict inequality also for proper subsheaves $\sF \subsetneq \sE$ of full rank.
This is because there might be a nonzero effective divisor $D \subset X$ with $D \cdot H^{n-1} = 0$. Then $\sE(-D) \subsetneq \sE$ is a full rank proper subsheaf with the same slope as $\sE$. Hence there would be no stable sheaves at all.

\section{Complete intersection curves through a fixed finite set} \label{sec:cic}

We use the notation from Theorem~\ref{thm:MR}.
Let $m$ be a positive integer such that $mH$ is very ample. Set
\[ V_m = \{ s \in H^0(X, mH) \;|\; s(P) = 0 \text{ for all $P \in S$} \} \]
and let $S_m = \P(V_m)$ be the projective space of lines in $V_m$.
If $\m = (m_1, \dots, m_{n-1})$ is a tuple of positive integers such that each $m_i H$ is very ample, set $S_\m = S_{m_1} \times \cdots \times S_{m_{n-1}}$ and let $Z_\m \subset X \times S_\m$ be the incidence correspondence
\[ Z_\m = \{ (x, s_1, \dots, s_{n-1}) \in X \times S_\m \;|\; s_i(x) = 0 \text{ for all $1 \le i \le n - 1$} \}. \]
We obtain a diagram
\[ \xymatrix{
X \times S_\m & Z_\m \ar@{ il->}[l] \ar^{p_\m}[r] \ar_{q_\m}[d] & X \\
& S_\m & 
} \]
with $p_\m$ and $q_\m$ the projections. The fibre of $q_\m$ over a point $(s_1, \dots, s_{n-1}) \in S_\m$ is (isomorphic to) the subscheme of $X$ cut out by the equations $s_1 = \cdots = s_{n-1} = 0$.

Let $X^\circ \subset X$ be the open subset of $X$ where $X$ is smooth and $\sE$ is locally free. We define two subsets $U_\m' \subset U_\m \subset S_\m$ by
\begin{align*}
U_\m & = \{ s \in S_\m \;|\; \dim q_\m^{-1}(s) = 1 \text{ and } q_\m^{-1}(s) \subset X^\circ \}, \\
U_\m' & = \{ s \in U_\m \;|\; q_\m^{-1}(s) \text{ is smooth} \}.
\end{align*}

\begin{prp}[Properties of $U_\m$ and $U_\m'$] \label{prp:qm}
\quad
\begin{enumerate}
\item\label{itm:qm.1} The subset $U_\m \subset S_\m$ is open and dense. The morphism $q_\m^{-1}(U_\m) \to U_\m$ is flat with connected fibres, and $p_\m^* \sE|_{q_\m^{-1}(U_\m)}$ is locally free.
In particular, $p_\m^* \sE|_{q_\m^{-1}(U_\m)}$ is flat over $U_\m$.
\item\label{itm:qm.2} The subset $U_\m' \subset U_\m$ is open.
If $\wt H_{m_i} := f^*(m_i H) - E$ is very ample on $\wt X$ for each $i$,
%There is an integer $M$ such that if $m_i \ge M$ for all $i$,
then $U_\m'$ is nonempty.
\end{enumerate}
\end{prp}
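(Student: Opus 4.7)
The plan is to verify the six assertions in turn, using dimension counts, a Bertini argument on the blowup $f \colon \wt X \to X$ of $X$ along $S$ (well-defined since $S \subset X_\reg$), and standard flatness and connectivity results. Let $E = f^{-1}(S) = \sum_{p \in S} E_p$ be the exceptional divisor, with each $E_p \isom \P^{n-1}$.

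For part (\ref{itm:qm.1}), openness of $U_\m$ splits into two conditions. The locus where $\dim q_\m^{-1}(s) = 1$ is open by upper semicontinuity of fibre dimension for the proper morphism $q_\m$, combined with a direct dimension count showing that the generic fibre has dimension $1$. The locus where $q_\m^{-1}(s) \subset X^\circ$ is open because $q_\m\bigl( p_\m^{-1}(X \minus X^\circ) \bigr)$ is closed in $S_\m$ by properness of $q_\m$. Density of $U_\m$ in the irreducible $S_\m$ then reduces to nonemptiness, which, after passing if necessary to a sufficiently divisible multiple of $\m$ where $\wt H_{m_i}$ becomes very ample, is supplied by part (\ref{itm:qm.2}). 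For the remaining claims, the fibres of $q_\m|_{q_\m^{-1}(U_\m)}$ are $1$-dimensional complete intersections of divisors in $|m_i H|$ inside the smooth locus $X^\circ$, hence Cohen--Macaulay with a common Hilbert polynomial (of degree $m_1 \cdots m_{n-1} \cdot H^n$); by the constant-Hilbert-polynomial criterion over the reduced base $U_\m$, the restricted morphism is flat. Connectedness of the fibres is the classical Lefschetz / Fulton--Hansen statement, applied iteratively to cuts of an irreducible projective variety of dimension $\ge 2$ by ample divisors. Finally, $p_\m\bigl( q_\m^{-1}(U_\m) \bigr) \subset X^\circ$ by definition of $U_\m$, so $p_\m^* \sE|_{q_\m^{-1}(U_\m)}$ is locally free on the total space and hence flat over $U_\m$.

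For part (\ref{itm:qm.2}), openness of $U_\m'$ inside $U_\m$ is the standard fact that the locus of smooth fibres is open for a flat proper morphism. For nonemptiness under the hypothesis that each $\wt H_{m_i} = f^*(m_i H) - E$ is very ample, I would pick general divisors $\wt D_i \in |\wt H_{m_i}|$ on $\wt X$ and form $\wt C = \wt D_1 \cap \cdots \cap \wt D_{n-1}$. By Bertini applied on $\wt X$, $\wt C$ is a smooth curve that avoids any fixed closed subset of codimension $\ge 2$, in particular $f^{-1}(X \minus X^\circ) \minus E$. The essential intersection-theoretic input is
\[ \wt H_{m_i}\big|_{E_p} = \bigl( f^*(m_i H) - E \bigr)\big|_{E_p} \isom \O_{\P^{n-1}}(1), \]
using $f^*(m_i H)|_{E_p} = 0$ and $-E|_{E_p} = \O_{\P^{n-1}}(1)$ (the dual of the normal bundle of $E_p$ in $\wt X$). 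Hence $\wt C \cap E_p$ is the intersection of $n-1$ general hyperplanes in $E_p \isom \P^{n-1}$, namely a single point at which $\wt C$ meets $E_p$ transversally (again by Bertini / Kleiman). Consequently $f|_{\wt C}$ is an isomorphism onto a smooth curve $C = f(\wt C) \subset X^\circ$ passing through each $p \in S$, and the corresponding element of $S_\m$ lies in $U_\m'$.

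I expect the main obstacle is the density assertion in (\ref{itm:qm.1}) under the bare hypothesis that each $m_i H$ is very ample: the blowup argument underlying (\ref{itm:qm.2}) requires the strictly stronger condition that $\wt H_{m_i}$ be very ample, so some care is needed in arguing that nonemptiness of $U_{k\m}$ (furnished by (\ref{itm:qm.2}) for large $k$) propagates back to nonemptiness of $U_\m$ itself. The remaining five assertions are a fairly mechanical combination of Bertini, miracle flatness, and Lefschetz.
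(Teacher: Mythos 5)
Most of your argument coincides with the paper's: the two openness statements in (\ref{itm:qm.1}), the flatness and connectedness of the fibres, the local freeness of $p_\m^* \sE$ over $q_\m^{-1}(U_\m)$, and the entire Bertini-on-the-blowup argument for (\ref{itm:qm.2}) (the paper phrases the key step as the intersection number computation $\wt H_{m_1} \cdots \wt H_{m_{n-1}} \cdot E_p = (-1)^{n-1} E_p^n = 1$, which is equivalent to your restriction computation $\wt H_{m_i}|_{E_p} \isom \O_{\P^{n-1}}(1)$). The one genuine gap is exactly the point you flag yourself: the density (equivalently, nonemptiness) of $U_\m$ in (\ref{itm:qm.1}). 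Your proposed route --- deduce it from (\ref{itm:qm.2}) after replacing $\m$ by a multiple --- does not work: nonemptiness of $U_{k\m}$ concerns curves cut out by sections of $|k m_i H|$ vanishing on $S$, and there is no mechanism for transporting such a curve back to a point of $U_\m$; the two parameter spaces are simply different. Moreover (\ref{itm:qm.1}) is used in the main proof for the tuple $\m = (km, \dots, km)$ without any very-ampleness hypothesis on $\wt H_{km}$ being invoked for that part, so the statement genuinely needs a proof under the bare hypothesis that $m_i H$ is very ample.

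The fix is a direct dimension count, which is what the paper does and which requires no blowup at all. Since $m_i H$ is very ample and $S$ is finite, the base locus of the subsystem $V_{m_i} \subset H^0(X, m_i H)$ is exactly $S$, and $S \subset X^\circ$ by hypothesis. Hence over any $x \in X \minus S$ the fibre of $p_\m\!: Z_\m \to X$ has codimension $n-1$ in $S_\m$, so the part of $Z_\m$ lying over $X \minus S$ has dimension $\dim S_\m + 1$, while the part over $S$ is $S \times S_\m$ of dimension $\dim S_\m$; therefore the general fibre of $q_\m$ has dimension $1$ (the local dimension at each $p \in S$ is at least $1$ by Krull and at most $1$ by the count). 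Likewise, $X \minus X^\circ$ has codimension at least $2$ and is disjoint from $S$, so $p_\m^{-1}(X \minus X^\circ)$ has dimension at most $\dim S_\m - 1$ and its image under $q_\m$ is a proper closed subset of $S_\m$; a general $s$ therefore satisfies $q_\m^{-1}(s) \subset X^\circ$. This gives nonemptiness, hence density, of $U_\m$ with no appeal to (\ref{itm:qm.2}).
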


\begin{proof}
For~(\ref{prp:qm}.\ref{itm:qm.1}), the set $\{ s \in S_\m \;|\; \dim q_\m^{-1}(s) = 1 \}$ is open by~\cite[Cor.~13.1.5]{EGA_IV_3}. The set $\{ s \in S_\m \;|\; q_\m^{-1}(s) \subset X^\circ \}$ is open since its complement is $q_\m(p_\m^{-1}(X \minus X^\circ))$, which is closed as $q_\m$ is proper. Hence $U_\m$ is open. If $s \in S_\m$ is general, then clearly $\dim q_\m^{-1}(s) = 1$, and $q_\m^{-1}(s) \subset X^\circ$ because $S \subset X^\circ$ and the complement of $X^\circ$ has codimension at least two in $X$. So $U_\m$ is nonempty.

It follows by the same argument as in the proof of~\cite[Prp.~1.5.i)]{MR82} that the morphism $q_\m^{-1}(U_\m) \to U_\m$ is flat. Its fibres, being intersections of ample divisors, are connected by~\cite[Ch.~III, Cor.~7.9]{Har77}. As $p_\m$ maps $q_\m^{-1}(U_\m)$ to $X^\circ$ by definition, the sheaf $p_\m^* \sE|_{q_\m^{-1}(U_\m)}$ is locally free.

Concerning~(\ref{prp:qm}.\ref{itm:qm.2}), the set $U_\m'$ is open by~\cite[Thm.~12.2.4.iii)]{EGA_IV_3}. Let $f\!: \wt X \to X$ be the blowup of $X$ in $S$. For any $p \in S$, let $E_p \subset \wt X$ be the exceptional divisor over $p$, and let $E = \sum E_p$ be the total exceptional divisor.
%By~\cite[Ch.~II, Ex.~7.14.b)]{Har77}, there is an integer $M$ such that $\wt H_m := f^*(mH) - E$ is very ample on $\wt X$ for $m \ge M$.
If $\wt H_{m_i}$ is very ample for each $i$, then a general curve $\wt C \in |\wt H_{m_1} \cap \cdots \cap \wt H_{m_{n-1}}|$ is smooth by Bertini's theorem. By an intersection number computation,
\[ \wt H_{m_1} \cdots \wt H_{m_{n-1}} \cdot E_p = (-1)^{n-1} E_p^n = 1 \quad\text{for all $p \in S$.} \]
This means that $\wt C$ intersects each divisor $E_p$ transversally in a single point. It follows that $\wt C$ maps isomorphically onto its image $C = f(\wt C) \subset X$, which is therefore smooth.
Under the isomorphism
\[ \P H^0(\wt X, \wt H_m) \isom S_m \]
the curve $\wt C$ corresponds to a point $s \in U_\m$ with $q_\m^{-1}(s) = C$, showing that $U_\m' \ni s$ is nonempty.
\end{proof}

\section{\Q-twisted vector bundles}

We will use the formalism of \Q-twisted vector bundles as presented in \cite[Sec.~6.2]{Laz04b}. For the reader's convenience, we recall here the notation and the most important facts.

\begin{dfn}[\Q-twisted bundles]
A \Q-twisted vector bundle $\sE\tw\delta.$ on a variety $X$ is an ordered pair $(\sE, \delta)$, where $\sE$ is a vector bundle on $X$ and $\delta \in N^1(X)_\Q$ is a numerical equivalence class.
The pullback of $\sE\tw\delta.$ by a morphism $f\!: Y \to X$ is defined as
\[ f^*( \sE\tw\delta. ) := (f^* \sE)\tw f^* \delta. . \]
\end{dfn}

\begin{dfn}[Normalized bundles]
Let $\sE$ be a vector bundle of rank $r$ on $X$. The normalized bundle $\sE\norm$ is defined to be the \Q-twisted bundle
\[ \sE\norm := \sE \big\langle \!-\textstyle\frac{1}{r} c_1(\sE) \big\rangle. \]
\end{dfn}

It is clear that normalization commutes with pullback, i.e.~for arbitrary morphisms $f\!: Y \to X$ we have
\[ f^*(\sE\norm) = (f^* \sE)\norm. \]

\begin{dfn}[Nef bundles]
Assume that $X$ is projective. A \Q-twisted bundle $\sE\tw\delta.$ is said to be \emph{nef} if
\[ \xi_\sE + \pi^* \delta \in N^1(\P(\sE))_\Q \]
is nef, where $\pi\!: \P(\sE) \to X$ is the bundle map.
\end{dfn}

\begin{lem}[Nefness and pullbacks] \label{lem:nef pullback}
Let $f\!: Y \to X$ be a surjective morphism of projective varieties. Then a \Q-twisted bundle $\sE\tw\delta.$ on $X$ is nef if and only if $f^*(\sE\tw\delta.)$ is nef.
\end{lem}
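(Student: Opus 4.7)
The plan is to reduce the statement to the well-known fact that for a surjective morphism $g\!: V \to W$ of projective varieties, a class $\alpha \in N^1(W)_\Q$ is nef if and only if $g^*\alpha$ is nef (see, e.g., \cite[Ex.~1.4.4]{Laz04b}). One implication is trivial (nefness is preserved by arbitrary pullbacks); the nontrivial direction uses that $g$ surjective implies that every curve in $W$ is the image of a curve in $V$.

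The key is to unwind the definition by means of a standard commutative diagram. Let $\pi\!: \P(\sE) \to X$ and $\pi'\!: \P(f^*\sE) \to Y$ denote the bundle maps. The natural isomorphism $\P(f^*\sE) \isom \P(\sE) \times_X Y$ gives rise to a cartesian square
\[ \xymatrix{
\P(f^*\sE) \ar^-{F}[r] \ar_{\pi'}[d] & \P(\sE) \ar^{\pi}[d] \\
Y \ar^{f}[r] & X
} \]
with $F$ surjective (since $f$ is surjective) and $F^* \sO_{\P(\sE)}(1) = \sO_{\P(f^*\sE)}(1)$, so that $F^*\xi_\sE = \xi_{f^*\sE}$. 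Combined with $F^*\pi^*\delta = \pi'^* f^*\delta$, we obtain
\[ F^*\bigl(\xi_\sE + \pi^*\delta\bigr) = \xi_{f^*\sE} + \pi'^* f^*\delta. \]

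I would then conclude as follows. By the definition of nefness for \Q-twisted bundles, $\sE\tw\delta.$ is nef iff $\xi_\sE + \pi^*\delta$ is nef on $\P(\sE)$, and $f^*(\sE\tw\delta.) = (f^*\sE)\tw f^*\delta.$ is nef iff $\xi_{f^*\sE} + \pi'^* f^*\delta$ is nef on $\P(f^*\sE)$. The display above identifies the latter class as $F^*$ of the former. Since $F$ is a surjective morphism of projective varieties, the cited fact about nefness under surjective pullbacks immediately yields the equivalence.

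The only mildly delicate point is the surjectivity of $F$, which follows from the cartesian description of $\P(f^*\sE)$ together with the surjectivity of $f$. Everything else is formal bookkeeping with the definitions from Section~4.
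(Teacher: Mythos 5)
Your proposal is correct and follows essentially the same route as the paper: both form the cartesian square over $f$, identify $\xi_{f^*\sE} + \pi'^{*}f^*\delta$ as the pullback of $\xi_\sE + \pi^*\delta$ under the induced surjective map on projectivized bundles, and reduce to the standard fact that nefness of a divisor class can be tested after pullback along a surjective morphism (note the reference is \cite[Ex.~1.4.4]{Laz04a}, not Laz04b). Your added remark on the surjectivity of $F$ is a fair point that the paper leaves implicit.
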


\begin{proof}
Consider the commutative diagram
\[ \xymatrix{
\P(f^* \sE) \ar_{\pi_Y}[d] \ar^g[r] & \P(\sE) \ar^{\pi_X}[d] \\
Y \ar^f[r] & X.
} \]
Clearly $\xi_{f^* \sE} + \pi_Y^*(f^* \delta) = g^*(\xi_\sE + \pi_X^* \delta)$. Hence the statement reduces to the case of line bundles, where it is well-known~\cite[Ex.~1.4.4]{Laz04a}.
\end{proof}

\begin{prp}[Criterion for semistability] \label{prp:ss crit}
Let $C$ be a smooth projective curve. Then a vector bundle $\sE$ on $C$ is semistable if and only if $\sE\norm$ is nef.
\end{prp}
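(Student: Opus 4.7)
The plan is to unpack the definition of nefness of $\sE\norm$, namely that $\xi_\sE - \tfrac1r\pi^* c_1(\sE) \in N^1(\P(\sE))_\Q$ is nef (where $r = \rk\sE$), and test this condition in two complementary ways for the two implications.

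For ``nef implies semistable'', I would take an arbitrary quotient bundle $\sE \twoheadrightarrow \sQ$ of rank $s$. The surjection induces an inclusion $\iota\!: \P(\sQ)\hookrightarrow\P(\sE)$ with $\iota^*\xi_\sE = \xi_\sQ$, so the restriction $\xi_\sQ - \tfrac1r\pi_\sQ^* c_1(\sE)$ is nef on the $s$-dimensional $\P(\sQ)$. Taking its top self-intersection, using $(\pi^* c_1(\sE))^k = 0$ for $k\ge 2$ (because $C$ is one-dimensional) together with the Segre-type identities $\pi_{\sQ *}(\xi_\sQ^{s-1}) = [C]$ and $\pi_{\sQ *}(\xi_\sQ^s) = c_1(\sQ)$, the self-intersection simplifies to $\deg\sQ - \tfrac{s}{r}\deg\sE$. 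Nefness forces this to be nonnegative, i.e.\ $\mu(\sQ)\ge \mu(\sE)$. Since on a smooth curve every subsheaf of $\sE$ has saturation with locally free quotient, this gives semistability of $\sE$.

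For ``semistable implies nef'', I would test nefness of $\xi_\sE - \tfrac1r\pi^* c_1(\sE)$ against any irreducible curve $D \subset \P(\sE)$. If $D$ lies in a fiber of $\pi$, then $\xi_\sE|_D$ is ample of positive degree and $\pi^* c_1(\sE)|_D = 0$, so the intersection is positive. Otherwise, letting $\nu\!: \tilde D \to C$ denote the composition of the normalization $\tilde D \to D$ with $\pi|_D$, the lifted morphism $\tilde D \to \P(\sE)$ factors through $\P(\nu^*\sE)$ as a section and therefore corresponds to a line bundle quotient $\nu^*\sE \twoheadrightarrow \sL$. A direct computation using this factorization gives
\[ \bigl(\xi_\sE - \tfrac1r\pi^* c_1(\sE)\bigr)\cdot D = \deg \sL - \mu(\nu^*\sE). \]
To conclude nonnegativity I would invoke semistability of $\nu^*\sE$, applied to the quotient $\sL$.

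The main technical point I expect to rely on is the invariance of semistability under pullback along a finite cover of smooth curves in characteristic zero, which is the only step that is not purely formal. The remaining ingredients—the identification $\iota^*\xi_\sE = \xi_\sQ$ coming from the universal property of $\P(\sE)$ as parametrizing one-dimensional quotients, and the intersection-theoretic reading of $\xi_\sE\cdot D$ as $\deg\sL$—are routine, but must be carried out with the correct Segre bookkeeping on the projective bundle over the curve.
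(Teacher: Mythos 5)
Your argument is correct. Note, however, that the paper offers no proof of this proposition at all: it simply cites \cite[Prp.~6.4.11]{Laz04b}. What you have written is essentially the standard argument underlying that reference (going back to Hartshorne and Miyaoka): for ``nef $\Rightarrow$ semistable'' you test the nef class $\xi_\sE - \tfrac1r\pi^*c_1(\sE)$ by nonnegativity of top self-intersections on the subvarieties $\P(\sQ)\subset\P(\sE)$ attached to quotient bundles, and for ``semistable $\Rightarrow$ nef'' you run the curve test on $\P(\sE)$, reducing a horizontal curve $D$ to a line-bundle quotient of $\nu^*\sE$ on the normalization. Both computations check out with the paper's quotient convention for $\P(\sE)$ and $\xi_\sE$, and you correctly dispose of the reduction from arbitrary subsheaves to locally free quotients via saturation (torsion-free sheaves on a smooth curve being locally free). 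You also correctly isolate the one genuinely non-formal ingredient, namely that $\nu^*\sE$ remains semistable under a finite morphism of smooth curves in characteristic zero; this is indeed where the argument would break in positive characteristic for inseparable covers (one would need strong semistability there), but it is harmless over $\C$, which is the setting of the paper. In short: your proof is a correct, self-contained substitute for the citation, not a deviation from the paper's (nonexistent) argument.
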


\begin{proof}
See~\cite[Prp.~6.4.11]{Laz04b}.
\end{proof}

\begin{rem} \label{rem:stable?}
It would be interesting to have a similar numerical criterion for stability. However, it seems that no such criterion can exist. Consider for simplicity the case of a semistable rank two, degree zero vector bundle $\sE$ on the curve $C$. In this case, $\sE\norm = \sE$ and $\pi\!: X = \P(\sE) \to C$ is a ruled surface.
It is easy to see (cf.~\cite[Sec.~1.5.A]{Laz04a}) that $\sE$ is stable if and only if
%\begin{sequation} \label{eqn:stable crit}
\[ H^0(X, L) = 0 \quad\text{for all $L \in \Pic(X)$ with $c_1(L) = \xi_\sE$.} \]
%\end{sequation}%
This is clearly not a numerical condition. To be more precise, whether $\sE$ is stable or not, in the N\'eron--Severi space of $X$ we have
\[ \Nef(X) = \NE(X) = \R^{\ge 0} \cdot \xi_\sE + \R^{\ge 0} \cdot f. \]
Here $f$ is the class of a fibre of $\pi$. Hence the cases $\sE$ strictly semistable (i.e.~semistable but not stable) and $\sE$ stable are indistinguishable from a numerical point of view.
\end{rem}

\section{Semistability of pullbacks}

We consider the setup from Theorem~\ref{thm:MR}.
Let $f\!: \wt X \to X$ be the blowup of $X$ in $S$. For any $p \in S$, let $E_p \subset \wt X$ be the exceptional divisor over $p$, and let $E = \sum E_p$ be the total exceptional divisor.
For brevity, we denote $f^* H - \eps E$ by $H_\eps$.
The aim of the present section is to establish the following property.

\begin{prp}[Semistability of pullbacks] \label{prp:pullback ss}
For $0 < \eps \ll 1$, the divisor $H_\eps$ is ample and the sheaf $f^* \sE$ is $H_\eps$-semistable.
\end{prp}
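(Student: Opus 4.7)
The plan follows the outline from the introduction. Ampleness of $H_\eps$ for small $\eps > 0$ is standard: since $f$ blows up a finite set of smooth points, $-E$ is $f$-ample, so $k_0 f^*H - E$ is ample for some $k_0 \in \N$, giving ampleness of $H_\eps$ for $\eps \in (0, 1/k_0]$.

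For semistability I first reduce to the stable case via a Jordan--H\"older filtration $0 = \sE_0 \subset \sE_1 \subset \cdots \subset \sE_\ell = \sE$ with $H$-stable factors $\cQ_i = \sE_i/\sE_{i-1}$ of common slope $\mu := \mu_H(\sE)$. By hypothesis each $\cQ_i$ is locally free near $S$; by induction on the defining extensions $0 \to \sE_{i-1} \to \sE_i \to \cQ_i \to 0$ both $\sE_i$ and the quotient $\sE/\sE_i$ are locally free near $S$ as well. Consequently $f^*$ preserves exactness of the filtration, producing $0 = f^*\sE_0 \subset \cdots \subset f^*\sE_\ell = f^*\sE$ with factors $f^*\cQ_i$. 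Granting the stable case below for each $\cQ_i$, one can choose a single $\eps_0 > 0$ such that every $f^*\cQ_i$ is $H_\eps$-stable for $\eps \in (0, \eps_0)$; the slope computation below also yields $\mu_{H_\eps}(f^*\cQ_i) = \mu_H(\cQ_i) = \mu = \mu_{H_\eps}(f^*\sE)$. So $f^*\sE$ is a successive extension of $H_\eps$-stable sheaves of common slope, hence $H_\eps$-semistable.

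For the stable case, let $\sF \subset f^*\sE$ be saturated with $0 < s := \rk\sF < r := \rk\sE$; note that $s = r$ forces $\sF = f^*\sE$ because $f^*\sE/\sF$ would be torsion-free and supported on the divisor $E$. Expanding $H_\eps^{n-1}$ and using that $f^*H|_{E_p}$ is trivial (as $E_p$ maps to the point $p$), all mixed intersections $(f^*H)^a \cdot E_p^b$ with $a, b \ge 1$ vanish, leaving
\[ c_1(\sF) \cdot H_\eps^{n-1} = c_1(\sF) \cdot (f^*H)^{n-1} + (-\eps)^{n-1} \alpha(\sF), \qquad \alpha(\sF) := c_1(\sF) \cdot E^{n-1}. \]
By the projection formula and the intersection formalism of Section~\ref{sec:notations}, the first term equals $c_1(\sG) \cdot H^{n-1}$, where $\sG \subset \sE$ is the saturation of the subsheaf corresponding to $\sF$ under $\wt X \minus E \cong X \minus S$. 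For $f^*\sE$ itself $\alpha(f^*\sE) = 0$, since $c_1(\sE)$ is Cartier near $S$ and its pullback restricts trivially to each $E_p$. Dividing by ranks,
\[ \mu_{H_\eps}(\sF) - \mu_{H_\eps}(f^*\sE) = \bigl(\mu_H(\sG) - \mu\bigr) + \frac{(-\eps)^{n-1}}{s}\alpha(\sF). \]
By $H$-stability and the discreteness of slopes (denominators divide $r!$), there is a uniform gap $\mu - \mu_H(\sG) \ge \delta > 0$ valid for all proper saturated $\sG \subset \sE$.

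The main obstacle is to control $\alpha(\sF)/s$ uniformly in $\sF$ so as to absorb it into the gap $\delta$. I would split into two sub-cases. If $\mu_H(\sG) \le \mu - 1$ the principal term overwhelms the correction as soon as $\eps^{n-1}$ is small relative to any crude Chern-class bound. If $\mu_H(\sG) > \mu - 1$, the Hilbert polynomial of $\sF$ with respect to a fixed ample $H_{\eps_0}$ is pinned between explicit bounds, so Grothendieck boundedness of the relevant Quot scheme confines such $\sF$ to a bounded family, on which $\alpha(\sF)$ takes only finitely many values. Choosing $\eps$ small enough that $\eps^{n-1}$ times this bound divided by $s$ is less than $\delta$ then closes the argument. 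Making this boundedness step precise in the singular setting of $\wt X$, equipped with only the semiample $f^*H$, is where the essential technical work lies.
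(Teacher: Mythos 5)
Your overall architecture --- ampleness of $H_\eps$, reduction to the stable case via a Jordan--H\"older filtration together with the fact that an extension of semistable sheaves of equal slope is semistable, and the expansion $\mu_\eps(\sF) = \mu_0(\sF) + \eps^{n-1}\,C(\sF)$ with no intermediate powers of $\eps$ (coming from $f^*H \cdot E = 0$) --- coincides with the paper's. The gap is exactly where you locate ``the essential technical work'': you never obtain a bound on the correction term $\alpha(\sF)/s$ that is uniform over all saturated subsheaves $\sF \subset f^*\sE$, and neither branch of your dichotomy supplies one. In the branch $\mu_H(\sG) \le \mu - 1$ there is no ``crude Chern-class bound'': nothing relates the size of the defect $\mu - \mu_H(\sG)$ to the size of $c_1(\sF)\cdot E^{n-1}$, which can be arbitrarily large for subsheaves with prescribed behaviour away from $E$, so the principal term does not automatically overwhelm the correction. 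In the other branch, a lower bound on $\mu_0(\sF) = \mu_H(\sG)$ does not obviously pin down the $H_{\eps_0}$-Hilbert polynomial of $\sF$ (the two slopes differ precisely by the uncontrolled term $\eps_0^{n-1}\alpha(\sF)/s$), so the appeal to boundedness of a Quot scheme is circular as stated; making it work on the singular $\wt X$ with the merely semiample $f^*H$ is a genuine piece of work that you acknowledge but do not carry out. As written, the proof of the stable case is therefore incomplete.

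The paper sidesteps boundedness entirely with an interpolation trick worth comparing. After rescaling $E$ so that $H_1 = f^*H - E$ is ample, the quantity $\mu_\eps^\mxsc(f^*\sE)$ is a finite, attained maximum at \emph{both} endpoints: at $\eps = 1$ by Proposition~\ref{prp:mu max} for the ample $H_1$, and at $\eps = 0$ by the same proposition for the semiample $f^*H$, where moreover $\mu_0^\mxsc(f^*\sE) < \mu_0(f^*\sE)$ by Lemma~\ref{lem:pullback ss}. Setting
\[ \Phi(\eps) := \mu_0^\mxsc(f^*\sE) + \eps^{n-1}\bigl(\mu_1^\mxsc(f^*\sE) - \mu_0^\mxsc(f^*\sE)\bigr), \]
one uses the elementary fact that two polynomials of the form $a_0 + a_k x^k$ ordered at $x=0$ and $x=1$ are ordered on all of $[0,1]$; applied to $\mu_\eps(\sF)$ and $\Phi(\eps)$, this gives $\mu_\eps(\sF) \le \Phi(\eps)$ uniformly in $\sF$, while $\Phi(\eps) < \mu_\eps(f^*\sE)$ for $0 < \eps \ll 1$ by continuity. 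This endpoint comparison is the single idea missing from your argument; the remainder of your proposal is sound and matches the paper.
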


The analogue of Proposition~\ref{prp:pullback ss} for stability will serve as the start of induction in the proof.

\begin{prp}[Stability of pullbacks] \label{prp:pullback stable}
If $\sE$ is $H$-stable, then for $0 < \eps \ll 1$ the divisor $H_\eps$ is ample and the sheaf $f^* \sE$ is $H_\eps$-stable.
\end{prp}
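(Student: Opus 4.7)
My plan is to make the $\eps$-dependence of slopes of subsheaves of $f^*\sE$ completely explicit and then extract a uniform destabilization margin from $H$-stability of $\sE$. Ampleness of $H_\eps$ for small $\eps>0$ is standard: each $E_p\isom\P^{n-1}$ with $E_p|_{E_p}=\O_{\P^{n-1}}(-1)$, so $-E$ is $f$-ample, and $f^*H-\eps E$ is therefore ample for $0<\eps\ll 1$ by Kleiman's criterion.

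The main slope computation rests on the vanishing
\[
f^*\alpha\cdot(f^*H)^{n-1-k}\cdot E^k \;=\; \alpha\cdot H^{n-1-k}\cdot f_*(E^k) \;=\; 0
\]
valid for every Weil divisor class $\alpha$ on $X$ and every $1\le k\le n-1$, since $f_*(E^k)$ is a cycle of dimension $n-k\ge 1$ supported on the finite set $S$. Applied to $\alpha=c_1(\sE)$ this yields $\mu_{H_\eps}(f^*\sE)=\mu_H(\sE)$, independent of $\eps$. For a saturated subsheaf $\sG\subset f^*\sE$ of rank $s<r$, local freeness of $\sE$ at each $p\in S$ (the stable-case specialization of the local-freeness hypothesis in Theorem~\ref{thm:MR}) gives $f_*f^*\sE=\sE$, so $f_*\sG$ is a rank-$s$ subsheaf of $\sE$. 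Writing $c_1(\sG)=f^*D_\sG+\sum_p b_p E_p$ in $N^1(\wt X)_\Q$ and using in addition that the $E_p$ are pairwise disjoint with $E_p^n=(-1)^{n-1}$, a direct expansion of $(f^*H-\eps E)^{n-1}$ collapses to
\[
\mu_{H_\eps}(\sG) \;=\; \mu_H(f_*\sG)+\frac{\eps^{n-1}}{s}\sum_p b_p.
\]

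Two uniform estimates then close the argument. Since $H$ is ample Cartier, $D\cdot H^{n-1}\in\Z$ for every Weil divisor class $D$ on $X$ (general complete intersection curves of $|H|$ miss $X_\sg$), which combined with $H$-stability of $\sE$ and $s<r$ gives the uniform lower bound $\mu_H(\sE)-\mu_H(f_*\sG)\ge\tfrac{1}{r(r-1)}$. On the other hand $f^*\sE|_{E_p}\isom\O_{E_p}^r$, so restricting $\sG$ to a general line $\ell\subset E_p$ the image inside $\O_\ell^r$ has nonpositive degree; this gives $b_p\ge 0$, and a uniform upper bound $\sum_p b_p\le B$ follows from Grothendieck's boundedness lemma \cite[Lem.~1.7.9]{HL97}, since $H$-semistability of $\sE$ forces every candidate destabilizer $\sG$ into a single bounded family via $\mu_{f^*H}(\sG)=\mu_H(f_*\sG)\le\mu_H(\sE)$, and intersection numbers with $E^{n-1}$ are uniformly controlled on such a family. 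Choosing $\eps_0>0$ with $\eps_0^{n-1}B<\tfrac{1}{r(r-1)}$ then forces $\mu_{H_\eps}(f^*\sE)>\mu_{H_\eps}(\sG)$ for all $0<\eps\le\eps_0$ and all saturated $\sG$ of rank $<r$. The delicate point that I expect to be the main obstacle is precisely this uniform upper bound on $\sum_p b_p$: Grothendieck's boundedness is formulated for ample polarizations while $f^*H$ is only semiample, so some care is needed (e.g., applying boundedness to $f_*\sG\subset\sE$ downstairs and tracking how $\sG$ can sit above a given $f_*\sG$) to guarantee that all potential destabilizers for any small $\eps$ really do lie in a single bounded family.
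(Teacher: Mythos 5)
Your overall architecture (make the $\eps$-dependence of slopes explicit as $\mu_{H_\eps}(\sG)=\mu_H(f_*\sG)+\tfrac{\eps^{n-1}}{s}\sum_p b_p$, extract a uniform margin $\tfrac{1}{r(r-1)}$ from stability, and beat it for small $\eps$) is sound, and the slope formula, the positivity $b_p\ge 0$ for saturated $\sG$, and the discreteness bound are all correct. But the step you yourself flag as the crux --- the uniform upper bound $\sum_p b_p\le B$ over all candidate destabilizers --- is not established, and the justification you offer is backwards. Grothendieck's boundedness lemma \cite[Lem.~1.7.9]{HL97} bounds the family of saturated subsheaves whose slope is bounded \emph{below}; the inequality you invoke, $\mu_{f^*H}(\sG)=\mu_H(f_*\sG)\le\mu_H(\sE)$, is an upper bound and gives no boundedness whatsoever. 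Indeed, subsheaves of very negative $H$-slope can have arbitrarily large $\sum_p b_p$ (e.g.\ saturations of pullbacks of subsheaves vanishing to high order at $p$), so no bound $B$ independent of $\sG$ exists over \emph{all} saturated subsheaves; one must genuinely use that $\sG$ destabilizes for some small $\eps$. A correct repair along your lines is: fix $\eps_0$ with $H_{\eps_0}$ ample; since $b_p\ge 0$, any $\sG$ with $\mu_{H_\eps}(\sG)\ge\mu_H(\sE)$ for some $\eps\in(0,\eps_0]$ satisfies $\mu_{H_{\eps_0}}(\sG)\ge\mu_{H_\eps}(\sG)\ge\mu_H(\sE)$, a \emph{lower} bound with respect to the ample class $H_{\eps_0}$, and only then does boundedness apply and yield a $B$ valid for the relevant $\sG$; afterwards shrink $\eps$ once more. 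As written, the proof has a genuine gap at its decisive step.

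It is worth noting that the paper avoids boundedness of families entirely. It records only that $\mu_\eps(\sF)=\mu_0(\sF)+C(\sF)\,\eps^{n-1}$ (Lemma~\ref{lem:mu eps}), and instead of bounding the coefficient $C(\sF)$ uniformly, it bounds the two \emph{values} $\mu_0(\sF)$ and $\mu_1(\sF)$ from above by $\mu_0^\mxsc(f^*\sE)$ and $\mu_1^\mxsc(f^*\sE)$, which are finite by Proposition~\ref{prp:mu max} (an elementary embedding $\sE^{**}\hookrightarrow\sL^{\oplus m}$ plus discreteness of slopes --- valid even for the merely semiample class $f^*H$). The elementary interpolation lemma for polynomials of the form $a_0+a_{n-1}\eps^{n-1}$ then controls $\mu_\eps(\sF)$ on all of $[0,1]$ from these two endpoint bounds. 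You may want to compare: your route needs uniformity of the coefficient, which is the hard quantity; the paper's route needs only upper bounds at $\eps=0$ and $\eps=1$, which are cheap.
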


\begin{rem}
If we assume $X$ to be \Q-factorial, Proposition~\ref{prp:pullback stable} follows immediately from~\cite[Thm.~3.3]{GKP14}. However, from the point of view of Theorem~\ref{thm:MR} a \Q-factoriality assumption seems quite unnatural.
\end{rem}

\subsection{Suprema of slopes of subsheaves}

In the proof of Proposition~\ref{prp:pullback stable}, we need to consider the following quantity associated to a torsion-free sheaf.

\begin{dfn}[Supremum of slopes of strict subsheaves] \label{dfn:mu max}
Let $\sE$ be a nonzero torsion-free sheaf on the normal projective variety $X$, and let $H$ be a semiample divisor on $X$. We define
%\[ \mu_H^\mx(\sE) := \sup \{ \mu_H(\sF) \;|\; \sF \subset \sE \text{ a nonzero subsheaf} \} \]
\[ \mu_H^\mxsc(\sE) := \sup \{ \mu_H(\sF) \;|\; \sF \subset \sE \text{ a subsheaf with } 0 < \rk \sF < \rk \sE \}. \]
\end{dfn}

\begin{prp}[Supremum is attained] \label{prp:mu max}
The supremum in Definition~\ref{dfn:mu max} is in fact a maximum. In particular, we have $\mu_H^\mxsc(\sE) < \infty$, and if $\sE$ is $H$-stable, then $\mu_H^\mxsc(\sE) < \mu_H(\sE)$.
\end{prp}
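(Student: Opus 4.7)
The plan is to reduce to the classical case of an ample polarization. I would fix an auxiliary ample divisor $A$ on $X$ and consider the ample perturbation $H_\delta := H + \delta A$ for any rational $\delta > 0$. The final clause $\mu_H^\mxsc(\sE) < \mu_H(\sE)$ under the stability hypothesis is immediate once the supremum is attained, since $H$-stability then forces strict inequality for the subsheaf realising the supremum. So the substance lies in finiteness and attainment.

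For finiteness, I would fix $m \in \N$ such that $mH$ is basepoint-free and let $C = D_1 \cap \cdots \cap D_{n-1}$ be a general complete intersection with $D_i \in |mH|$. Then $C$ is smooth and contained in the open subset $X^\circ \subset X$ on which $X$ is smooth and $\sE$ is locally free. Any saturated subsheaf $\sF \subset \sE$ is reflexive, hence locally free outside a codimension-two subset, so for $C$ sufficiently general (depending on $\sF$) the restriction $\sF|_C$ is a vector subbundle of $\sE|_C$, and by the definition of slope with respect to a semiample divisor recalled in Section~\ref{sec:stab semiample},
\[ \mu_H(\sF) \cdot m^{n-1} \;=\; \frac{\deg(\sF|_C)}{\rk \sF} \;\le\; \mu_{\max}(\sE|_C). \]
As $C$ varies over the bounded family of general complete intersections in $|mH|^{n-1}$, the quantity $\mu_{\max}(\sE|_C)$ admits a uniform upper bound, so $\mu_H^\mxsc(\sE) < \infty$.

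For attainment, I would take a maximizing sequence of saturated subsheaves $(\sF_i)_{i \in \N}$ with $\mu_H(\sF_i) \to \mu_H^\mxsc(\sE)$, and after passing to a subsequence assume the common rank $r = \rk \sF_i$ lies in $\{1, \dots, \rk \sE - 1\}$. Since any subsheaf of $\sF_i$ is also a subsheaf of $\sE$, the ample-polarization inequality $\mu_{H_\delta}^\max(\sF_i) \le \mu_{H_\delta}^\max(\sE) < \infty$ holds uniformly in $i$, so Grothendieck's boundedness lemma \cite[Lem.~1.7.9]{HL97} applied with respect to $H_\delta$ implies that $\{\sF_i\}$ is a bounded family of subsheaves of $\sE$. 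In a bounded family parametrised by a scheme of finite type, the first Chern class is locally constant and hence takes only finitely many values; consequently only finitely many values of $\mu_H(\sF_i)$ occur, and since these converge to $\mu_H^\mxsc(\sE)$ the supremum must equal one of them and is therefore attained.

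The main obstacle I anticipate is bridging the gap between the semiample $H$ featuring in the statement and the ample $H_\delta$ needed to invoke Grothendieck's lemma. Once one observes that boundedness of $\{\sF_i\}$ is detected by \emph{any} ample polarization, the perturbation trick via $A$ suffices; the rest is a clean extraction-of-limit argument from the finiteness of Chern data in a bounded family.
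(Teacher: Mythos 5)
Your reduction of the final clause to attainment is fine, and the restriction-to-curves argument for finiteness can be made to work (though the uniform bound on $\mu_{\max}(\sE|_C)$ as $C$ varies is asserted rather than proved, and is itself an input of roughly the same weight as the statement). The genuine gap is in the attainment step. Grothendieck's boundedness lemma requires a \emph{lower} bound on the slope (equivalently the degree) of the subsheaves $\sF_i$ with respect to the ample polarization $H_\delta$ you use to invoke it; a uniform \emph{upper} bound on $\mu_{H_\delta}^{\max}(\sF_i)$ gives nothing. Indeed, $\sO_{\P^1}(-n) \inj \sO_{\P^1}^{\oplus 2}$ is saturated with $\mu_{\max} = -n \le 0 = \mu_{\max}\big(\sO_{\P^1}^{\oplus 2}\big)$ for every $n$, yet the family $\{\sO_{\P^1}(-n)\}_n$ is unbounded. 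The lower bound you actually possess --- $\mu_H(\sF_i)$ converges, hence is eventually bounded below --- is with respect to the merely semiample $H$: it controls only $c_1(\sF_i) \cdot H^{n-1}$ and says nothing about the mixed terms $c_1(\sF_i) \cdot H^{n-1-j} A^{j}$ that enter $\mu_{H_\delta}(\sF_i)$. So you cannot conclude that $\{\sF_i\}$ is bounded, and the extraction-of-limit argument does not get off the ground.

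The paper's proof avoids all of this machinery. For finiteness it chooses $\sL$ ample with $\sE^* \tensor \sL$ globally generated, dualizes to embed $\sE \inj \sL^{\oplus m}$, and concludes that every subsheaf of $\sE$ has slope bounded above (being a subsheaf of the $H$-semistable sheaf $\sL^{\oplus m}$). For attainment it simply observes that every slope $c_1(\sF) \cdot H^{n-1} / \rk \sF$ lies in the discrete set $\frac{1}{(\rk \sE)!} \cdot \Z$, because the intersection number is an integer and $\rk \sF$ divides $(\rk \sE)!$; a nonempty subset of a discrete set that is bounded above has a maximum. No boundedness of families is needed anywhere. If you want to salvage your write-up, the cleanest fix is to replace the appeal to Grothendieck's lemma by this discreteness observation, which also renders the perturbation $H_\delta$ unnecessary.
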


\begin{proof}
By Serre vanishing, for some sufficiently ample line bundle $\sL$ the sheaf $\sE^* \tensor \sL$ is globally generated, i.e.~it is a quotient of a free sheaf $\O_X^{\oplus m}$. Twisting by $\sL^*$ and dualizing, we see that the double dual $\sE^{**}$ can be embedded into the direct sum $\sL^{\oplus m}$. As $\sE$ is torsion-free, the natural map $\sE \to \sE^{**}$ is injective and hence also $\sE$ itself is embedded in $\sL^{\oplus m}$. It follows that the set of slopes in question is bounded above.
Being contained in the set $\frac{1}{(\rk \sE)!} \cdot \Z$, it does not have an accumulation point in $\R$. Thus it has a maximum.
%The same argument also works for $\{ \mu_H(\sF) \;|\; \sF \subset \sE, \, \rk \sF < \rk \sE \}$.
\end{proof}

\subsection{Auxiliary lemmas}

We switch back to the notation introduced at the beginning of this section.
The following two lemmas will be used later.

\begin{lem}[Weak semistability of pullbacks] \label{lem:pullback ss}
Let $\sF$ be a torsion-free sheaf on $X$ which is locally free at each $p \in S$. If $\sF$ is $H$-semistable (resp., $H$-stable), then $f^* \sF$ is $f^* H$-semistable (resp., $f^* H$-stable).
\end{lem}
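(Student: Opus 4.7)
The plan is to reduce $(f^*H)$-(semi)stability of $f^*\sF$ on $\wt X$ to $H$-(semi)stability of $\sF$ on $X$ via the pushforward $f_*$. The preliminary identity I need is $f_*f^*\sF = \sF$. This follows from the projection formula: locally around each $p \in S$, the sheaf $\sF$ is free, so $f_*f^*\sF \isom \sF \tensor f_*\O_{\wt X}$; and $f_*\O_{\wt X} = \O_X$ because $f$ is an isomorphism over $X \minus S$ and a blowup of a smooth point over each $p \in S$. Away from $S$, the equality is trivial.

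Given this, for any nonzero subsheaf $\sG \subset f^*\sF$, left-exactness of $f_*$ produces an embedding $f_*\sG \subset \sF$, and $\rk f_*\sG = \rk \sG$ since $f$ is birational. The crux is then to identify $\mu_{f^*H}(\sG)$ with $\mu_H(f_*\sG)$. I would do this using the general complete intersection definition of slope with respect to a semiample divisor from Section~\ref{sec:stab semiample}. Choose $H_1, \dots, H_{n-1}$ general elements of a sufficiently large multiple of $|H|$ so that $C := H_1 \cap \cdots \cap H_{n-1}$ is a smooth curve contained in $X_\reg \minus S$ along which both $\sG$ and $f_*\sG$ are locally free (possible since their non-locally-free loci have codimension $\ge 2$). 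Then $\wt C := f^{-1}(C)$ avoids $E$, maps isomorphically onto $C$, and represents the corresponding intersection class $(f^*H)^{n-1}$ on $\wt X$. Under this isomorphism $\sG|_{\wt C} = (f_*\sG)|_C$, so the two degrees agree and the slopes coincide. Applied in particular to $\sG = f^*\sF$, this also gives $\mu_{f^*H}(f^*\sF) = \mu_H(\sF)$.

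The conclusion then follows at once. If $\sF$ is $H$-semistable, then $\mu_H(f_*\sG) \le \mu_H(\sF)$, hence $\mu_{f^*H}(\sG) \le \mu_{f^*H}(f^*\sF)$, proving semistability of $f^*\sF$. If $\sF$ is $H$-stable and $\rk \sG < \rk f^*\sF = \rk \sF$, then $f_*\sG$ is a subsheaf of $\sF$ of strictly smaller rank, so stability of $\sF$ forces $\mu_H(f_*\sG) < \mu_H(\sF)$ and therefore $\mu_{f^*H}(\sG) < \mu_{f^*H}(f^*\sF)$, proving stability of $f^*\sF$.

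The main point that requires care is the slope comparison $\mu_{f^*H}(\sG) = \mu_H(f_*\sG)$, because $f^*H$ is merely semiample on $\wt X$. One must work with the general complete intersection definition from Section~\ref{sec:stab semiample} rather than with arbitrary ample intersection numbers, and verify that a general curve cut out in $\wt X$ by a large multiple of $|f^*H|$ actually descends to a general complete intersection curve in $|H|$ on $X$ that avoids $S$, the singular locus of $X$, and the non-locally-free loci of $\sG$ and $f_*\sG$.
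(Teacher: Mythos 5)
Your proposal is correct and follows essentially the same route as the paper: push a subsheaf $\sG \subset f^*\sF$ down to $f_*\sG \subset f_*f^*\sF = \sF$ and compare slopes via $\mu_{f^*H}(\sG) = \mu_H(f_*\sG)$. The paper states these identities without elaboration, whereas you justify them (projection formula for $f_*f^*\sF=\sF$, and the general complete intersection curve avoiding $S$ and the bad loci for the slope comparison), which is a faithful filling-in of the same argument rather than a different one.
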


\begin{proof}
We only deal with the semistable case, as the stable case is similar.
It is clear that $f^* \sF$ is a torsion-free sheaf on $\wt X$ with $\mu_{f^* H}(f^* \sF) = \mu_H(\sF)$. Let $\sG \subset f^* \sF$ be any nonzero subsheaf. Pushing down, we get an inclusion $f_* \sG \subset f_* f^* \sF = \sF$. Hence $\mu_H(f_* \sG) \le \mu_H(\sF)$. But $\mu_{f^* H}(\sG) = \mu_H(f_* \sG)$, so $\mu_{f^* H}(\sG) \le \mu_{f^* H}(f^* \sF)$.
\end{proof}

\begin{lem}[Extensions of semistable sheaves] \label{lem:ext ss}
Let
\[ 0 \lto \sF' \lto \sF \lto \sF'' \lto 0 \]
be a short exact sequence of torsion-free sheaves on $X$. If $\sF'$ and $\sF''$ are $H$-semistable of the same slope $\mu$, then also $\sF$ is $H$-semistable of slope $\mu$.
\end{lem}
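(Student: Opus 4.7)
The plan is to reduce the inequality $\mu_H(\sG) \le \mu$ for an arbitrary nonzero subsheaf $\sG \subset \sF$ to the corresponding inequalities for $\sF'$ and $\sF''$ via the obvious induced short exact sequence. The only mildly delicate point is the additivity of first Chern classes in short exact sequences of torsion-free sheaves on the normal variety $X$; once that is in hand, the argument is a one-line slope computation.

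First I would verify that $\mu_H(\sF) = \mu$. Restricting the given sequence to the smooth locus $X_\reg$ (on whose complement, which has codimension $\ge 2$, Weil divisor classes are determined), standard additivity of Chern classes of locally free sheaves gives $c_1(\sF) = c_1(\sF') + c_1(\sF'')$ as Weil divisor classes on $X$. Combined with $\rk \sF = \rk \sF' + \rk \sF''$ and the definition of the intersection number $c_1(\kdot) \cdot H^{n-1}$ recalled in Section~\ref{sec:notations} (which is well-defined for semiample $H$), this yields
\[
\mu_H(\sF) \cdot \rk \sF = \mu_H(\sF') \cdot \rk \sF' + \mu_H(\sF'') \cdot \rk \sF'' = \mu \cdot \rk \sF.
\]

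Next, let $\sG \subset \sF$ be any nonzero subsheaf. Set $\sG' := \sG \cap \sF'$ and let $\sG'' \subset \sF''$ be the image of $\sG$ under $\sF \surj \sF''$. Both $\sG'$ and $\sG''$ are torsion-free (the latter as a subsheaf of the torsion-free sheaf $\sF''$), and we obtain a short exact sequence
\[
0 \lto \sG' \lto \sG \lto \sG'' \lto 0.
\]
If $\sG'$ is nonzero, then $H$-semistability of $\sF'$ gives $\mu_H(\sG') \le \mu$; if $\sG''$ is nonzero, then $H$-semistability of $\sF''$ gives $\mu_H(\sG'') \le \mu$. Applying additivity of $c_1(\kdot) \cdot H^{n-1}$ to the displayed sequence (again valid for semiample $H$ on $X$), we compute
\[
c_1(\sG) \cdot H^{n-1} = c_1(\sG') \cdot H^{n-1} + c_1(\sG'') \cdot H^{n-1} \le \mu \cdot \rk \sG' + \mu \cdot \rk \sG'' = \mu \cdot \rk \sG,
\]
where zero summands are handled by the convention that a vanishing subsheaf contributes nothing. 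Dividing by $\rk \sG$ gives $\mu_H(\sG) \le \mu = \mu_H(\sF)$, proving $H$-semistability of $\sF$.

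The only step requiring any care is the additivity of $c_1$ along the short exact sequences, since $X$ is merely normal and $H$ is only assumed semiample; but this is exactly the content of the setup in Section~\ref{sec:notations}, where the intersection number is defined by passing to a general complete intersection curve lying in the smooth locus, on which classical additivity applies. No further input is needed.
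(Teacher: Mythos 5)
Your proof is correct, but it takes a genuinely different route from the paper's. The paper argues by contradiction: assuming $\sF$ is not $H$-semistable, it takes the maximal destabilizing subsheaf $\sG \subset \sF$ (citing \cite[Def.~1.3.6]{HL97}), which is $H$-semistable of slope strictly greater than $\mu$; since by \cite[Prp.~1.2.7]{HL97} there is no nonzero map from a semistable sheaf to a semistable sheaf of strictly smaller slope, the composite $\sG \to \sF''$ vanishes, the resulting map $\sG \to \sF'$ vanishes for the same reason, and so $\sG = 0$, a contradiction. Your argument is the direct one: intersect an arbitrary nonzero $\sG \subset \sF$ with $\sF'$, project to $\sF''$, and add up degrees. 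Both are standard and both work here. Yours is more elementary and self-contained, avoiding the existence of the maximal destabilizing subsheaf and the Hom-vanishing lemma (which \cite{HL97} states for ample polarizations, whereas the lemma is also invoked in the paper for merely semiample ones); the price is that you must justify additivity of $c_1(\sG) \cdot H^{n-1}$ along short exact sequences of torsion-free sheaves on the normal variety $X$. That additivity does hold --- restrict to the open set where all three sheaves in the sequence are locally free, whose complement has codimension at least two, and use multiplicativity of determinants --- but note that restricting only to $X_\reg$, as you phrase it, is not quite enough, since a torsion-free sheaf need not be locally free on all of $X_\reg$; one should shrink to the common locally free locus (or appeal to additivity of $c_1$ for arbitrary coherent sheaves on a smooth variety). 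With that small repair your proof is complete, and the paper's route simply hides the same degree bookkeeping inside the Harder--Narasimhan machinery of \cite{HL97}.
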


\begin{proof}
It is easily verified that $\mu_H(\sF) = \mu$. Proceeding by contradiction, assume that $\sF$ is not $H$-semistable, and let $0 \ne \sG \subset \sF$ be the maximally destabilizing subsheaf~\cite[Def.~1.3.6]{HL97}. Then $\sG$ is $H$-semistable of slope $\mu_H(\sG) > \mu$. By~\cite[Prp.~1.2.7]{HL97}, the induced map $\sG \to \sF''$ is zero. Hence $\sG \inj \sF$ factors through a map $\sG \to \sF'$, which is zero for the same reason. It follows that $\sG = 0$, a contradiction.
\end{proof}

\subsection{Proof of Proposition~\ref{prp:pullback stable}}

Replacing $E$ by a sufficiently small positive multiple, we may assume that $f^* H - E$ is ample. We denote the slope with respect to $H_\eps = f^* H - \eps E$ by $\mu_\eps$.

\begin{lem}[Dependence of slopes on perturbation] \label{lem:mu eps}
For any torsion-free sheaf $\sF$ on $\wt X$, there is a constant $C = C(\sF)$ such that
\[ \mu_\eps(\sF) = \mu_0(\sF) + C \cdot \eps^{n-1}. \]
If $\sF$ is free in a neighborhood of $E$, then $C(\sF) = 0$.
\end{lem}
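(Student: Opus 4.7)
The plan is to compute $\mu_\eps(\sF) = \frac{c_1(\sF) \cdot H_\eps^{n-1}}{\rk \sF}$ by expanding $H_\eps^{n-1} = (f^*H - \eps E)^{n-1}$ via the binomial theorem and showing that only the $\eps^0$ and $\eps^{n-1}$ terms survive. To handle the fact that $c_1(\sF)$ is only a Weil divisor class on the possibly singular $\wt X$, I would pass to a resolution $\pi\colon Y \to \wt X$ that is an isomorphism over a neighborhood of $E$ (available since $\wt X$ is already smooth near $E$, because $S \subset X_\reg$), represent $c_1(\sF)$ by a Cartier divisor $D$ on $Y$ with $\pi_* D = c_1(\sF)$, and compute $c_1(\sF) \cdot H_\eps^{n-1} = D \cdot (\pi^* H_\eps)^{n-1}$ as in Section~\ref{sec:stab semiample}. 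The binomial expansion then gives
\[ \mu_\eps(\sF) - \mu_0(\sF) = \frac{1}{\rk \sF} \sum_{i=1}^{n-1} \binom{n-1}{i}(-\eps)^i \, D \cdot (\pi^* f^*H)^{n-1-i} \cdot (\pi^* E)^i. \]

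The crux is the vanishing of the mixed intersection numbers $D \cdot (\pi^* f^*H)^a \cdot (\pi^* E)^b$ whenever both $a \ge 1$ and $b \ge 1$. Writing $E = \sum_{p \in S} E_p$ and using the disjointness of the exceptional divisors over distinct points, $E^b = \sum_p E_p^b$. Since $\pi$ is an isomorphism over a neighborhood of each $E_p$, I can compute these intersections as if on $\wt X$. Each $E_p \cong \P^{n-1}$ lies in the smooth locus of $\wt X$, and up to sign $E_p^b$ is represented by a linear subspace $\Lambda_p \subset E_p$ of positive dimension $n-b$. Since $f$ contracts $E_p$ to the point $p \in X$, we have $f^*H|_{\Lambda_p} \equiv 0$, hence $(f^*H)^a \cdot [\Lambda_p] = 0$ for $a \ge 1$, which yields the desired vanishing. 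Consequently,
\[ \mu_\eps(\sF) = \mu_0(\sF) + \frac{(-1)^{n-1} c_1(\sF) \cdot E^{n-1}}{\rk \sF} \cdot \eps^{n-1}, \]
so I would set $C(\sF) := \frac{(-1)^{n-1} c_1(\sF) \cdot E^{n-1}}{\rk \sF}$.

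For the final assertion, suppose $\sF$ is free in a neighborhood of $E$. Using $E_p^{n-1} = (-1)^{n-2}[\ell_p]$ for a line $\ell_p \subset E_p$ reduces the remaining intersection to $c_1(\sF) \cdot E_p^{n-1} = (-1)^{n-2} \deg(\sF|_{\ell_p})$, and this vanishes because $\sF|_{\ell_p}$ is a trivial bundle on $\ell_p \cong \P^1$. Summing over $p \in S$ gives $C(\sF) = 0$.

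The main obstacle I expect is not any deep geometric content but the intersection-theoretic bookkeeping on the singular $\wt X$, where $c_1(\sF)$ is Weil rather than Cartier, together with keeping track of the sign conventions coming from $\sO_{E_p}(E_p) \cong \sO_{\P^{n-1}}(-1)$. Since every cycle supported on the exceptional locus lives in the smooth locus of $\wt X$, where $c_1(\sF)$ is Cartier, this is a careful-setup issue rather than a substantive difficulty.
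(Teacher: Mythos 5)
Your proposal is correct and follows essentially the same route as the paper: pass to a resolution of $\wt X$, represent $c_1(\sF)$ by a divisor pushing forward to it, expand $(f^*H-\eps E)^{n-1}$, and kill the mixed terms using that $f^*H$ is trivial on the contracted divisor $E$ (the paper summarizes this as $f^*H\cdot E=0$, which your cycle-level argument on $E_p\cong\P^{n-1}$ justifies). The only cosmetic difference is in the last claim, where the paper moves the representing divisor $D$ off a neighborhood of $E$ instead of computing $\deg(\sF|_{\ell_p})=0$ on a line $\ell_p\subset E_p$; both are valid.
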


\begin{proof}
Let $g\!: Y \to \wt X$ be a resolution of singularities, and let $D_Y$ be the strict transform of a Weil divisor $D$ representing $c_1(\sF)$. Then, noting that $f^* H \cdot E = 0$, we calculate
\begin{equation*}
\begin{array}{ccl}
c_1(\sF) \cdot H_\eps^{n-1} & = & D_Y \cdot g^* H_\eps^{n-1} \\
  & = & D_Y \cdot g^* f^* H^{n-1} + \eps^{n-1} \cdot D_Y \cdot (-g^* E)^{n-1}.
\end{array}
\end{equation*}
The first claim follows. If $\sF$ is free in a neighborhood of $E$, then $D$ can be chosen disjoint from $E$, whence $D_Y \cdot (-g^* E)^{n-1} = 0$.
\end{proof}

Now consider the function
\[ \Phi(\eps) := \mu_0^\mxsc(f^* \sE) + \eps^{n-1} \cdot \big( \mu_1^\mxsc(f^* \sE) - \mu_0^\mxsc(f^* \sE) \big). \]
We have $\Phi(0) = \mu_0^\mxsc(f^* \sE) < \mu_0(f^* \sE)$ by Lemma~\ref{lem:pullback ss} and Proposition~\ref{prp:mu max}. By continuity, there exists $\eps_0 > 0$ such that $\Phi(\eps) < \mu_\eps(f^* \sE)$ for any $0 < \eps < \eps_0$.
We claim that for any subsheaf $\sF \subset f^* \sE$ with $0 < \rk \sF < \rk \sE$, we have $\mu_\eps(\sF) \le \Phi(\eps)$ for $0 \le \eps \le 1$. This implies that $f^* \sE$ is $H_\eps$-stable for $\eps < \eps_0$.
Note that
\begin{equation*}
\begin{array}{l}
\mu_0(\sF) \le \mu_0^\mxsc(f^* \sE) = \Phi(0) \quad \text{and} \\[1ex]
\mu_1(\sF) \le \mu_1^\mxsc(f^* \sE) = \Phi(1).
\end{array}
\end{equation*}
The claim is thus a consequence of Lemma~\ref{lem:mu eps} and the following elementary assertion. \qed

\begin{lem}
Let $f, g$ be two real polynomials of the form $a_0 + a_k x^k$, for the same $k \ge 1$. If $f(0) \le g(0)$ and $f(1) \le g(1)$, then $f \le g$ on the interval $[0, 1]$.
\end{lem}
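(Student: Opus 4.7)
The plan is to reduce the inequality to a single polynomial of the same form by considering $h(x) := g(x) - f(x)$. Since $f$ and $g$ both have the form $a_0 + a_k x^k$ with the same exponent $k$, the difference is of the same form, say $h(x) = c_0 + c_k x^k$. The hypotheses $f(0) \le g(0)$ and $f(1) \le g(1)$ translate to $h(0) = c_0 \ge 0$ and $h(1) = c_0 + c_k \ge 0$, and the goal becomes showing $h(x) \ge 0$ on $[0,1]$.

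The key observation is that for $k \ge 1$ the monomial $x^k$ is monotonically non-decreasing on $[0,1]$, so $h$ itself is monotone on this interval: non-decreasing if $c_k \ge 0$, and non-increasing if $c_k < 0$. I would therefore split into two cases based on the sign of $c_k$. In the first case, for any $x \in [0,1]$,
\[ h(x) \ge h(0) = c_0 \ge 0. \]
In the second case, $h$ is non-increasing, and so
\[ h(x) \ge h(1) = c_0 + c_k \ge 0. \]
Either way $h(x) \ge 0$ on $[0,1]$, which gives the claim.

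There is essentially no serious obstacle here; the statement reduces to the elementary fact that a monotone function on a closed interval attains its minimum at an endpoint, and the role of the hypothesis $k \ge 1$ is precisely to guarantee the monotonicity of $x^k$ on $[0,1]$. (If one had allowed $k = 0$ the statement would be trivial but uninteresting; if one had allowed non-integer $k < 1$ the same monotonicity argument would still work, so the restriction $k \ge 1$ is really only used to know what kind of object $f$ and $g$ are.)
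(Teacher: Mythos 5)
Your proof is correct and follows essentially the same route as the paper: both arguments reduce to the difference polynomial $h = g - f$, which again has the form $c_0 + c_k x^k$, and then conclude that its minimum on $[0,1]$ is attained at an endpoint. The only cosmetic difference is that the paper argues by contradiction via an interior extremum where the derivative $k c_k x_0^{k-1}$ would have to vanish, whereas you observe directly that $h$ is monotone on $[0,1]$ according to the sign of $c_k$; both are valid.
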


\begin{proof}
It suffices to show that if $h = b_0 + b_k x^k$ is a polynomial of the above form, then $h(0) \le 0$ and $h(1) \le 0$ imply $h \le 0$ on $[0, 1]$.
If $h(x) > 0$ for some $x \in (0, 1)$, then $h|_{[0, 1]}$ attains its maximum at some $x_0 \in (0, 1)$, and then $h'(x_0) = k b_k x_0^{k-1} = 0$. Thus $b_k = 0$, i.e.~$h$ is constant and the assertion is clear.
\end{proof}

\subsection{Proof of Proposition~\ref{prp:pullback ss}}

The proof proceeds by induction on the length $\ell$ of a Jordan--H\"older filtration of $\sE$ with respect to the polarization $H$. If $\ell = 1$, i.e.~if $\sE$ is $H$-stable, then by Proposition~\ref{prp:pullback stable} for $0 < \eps \ll 1$ the sheaf $f^* \sE$ is $H_\eps$-stable, in particular $H_\eps$-semistable.

So let $\ell > 1$, and assume that the claim has already been shown for sheaves admitting a Jordan--H\"older filtration of length at most $\ell - 1$. Let
\begin{sequation} \label{eqn:JHF E}
0 = \sE_0 \subset \sE_1 \subset \cdots \subset \sE_\ell = \sE
\end{sequation}%
be a Jordan--H\"older filtration of $\sE$. This gives rise to a short exact sequence
\begin{sequation} \label{eqn:E_1}
0 \lto \sE_1 \lto \sE \lto \sG \lto 0.
\end{sequation}%

\begin{clm} \label{clm:JHF}
The following properties hold true.
\begin{enumerate}
\item\label{itm:JHF.1} $\sE_1$ is torsion-free and $H$-stable of slope $\mu := \mu_H(\sE)$.
\item\label{itm:JHF.2} $\sG$ is torsion-free and $H$-semistable of slope $\mu$.
\item\label{itm:JHF.3} $\sG$ has a Jordan--H\"older filtration of length exactly $\ell - 1$.
\item\label{itm:JHF.4} $\sE_1$, $\sG$ and $\gr_H(\sG)$ are locally free at each $p \in S$.
\end{enumerate}
In particular, we may apply the inductive hypothesis to $\sE_1$ and to $\sG$.
\end{clm}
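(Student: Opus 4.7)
My plan is to verify each of the four assertions in turn; all of them follow rather formally from the definition of a Jordan--H\"older filtration together with Lemma~\ref{lem:ext ss}. The key preliminary observation is that, for a semistable sheaf $\sE$, every Jordan--H\"older factor $\cQ_i$ has slope exactly $\mu := \mu_H(\sE)$. The slopes $\mu_H(\cQ_i)$ are non-increasing by the definition of a Jordan--H\"older filtration, and $\mu_H(\cQ_1) = \mu_H(\sE_1) \leq \mu$ by $H$-semistability of $\sE$; hence $\mu_H(\cQ_i) \leq \mu$ for every $i$. Since $\mu$ is the rank-weighted average of the $\mu_H(\cQ_i)$ (an immediate consequence of additivity of $c_1$ and of rank in short exact sequences), all of them must in fact equal $\mu$.

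With this in hand, part (1) is immediate: $\sE_1 = \cQ_1$ is $H$-stable and torsion-free by the very definition of a Jordan--H\"older filtration, and it has slope $\mu$ by the observation above. For parts (2) and (3) I would introduce the induced filtration $\sG_j := \sE_{j+1}/\sE_1$ of $\sG = \sE/\sE_1$, for $j = 0, \dots, \ell - 1$, whose successive quotients satisfy $\sG_j/\sG_{j-1} \isom \cQ_{j+1}$. Each $\cQ_{j+1}$ is torsion-free and $H$-stable of slope $\mu$; torsion-freeness is preserved under extensions, and so an iterated application of Lemma~\ref{lem:ext ss} yields that $\sG$ is torsion-free and $H$-semistable of slope $\mu$. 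The filtration $\sG_\bullet$ itself then exhibits a Jordan--H\"older filtration of $\sG$ of length exactly $\ell - 1$.

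For part (4) I would exploit the hypothesis that $\gr_H(\sE) = \bigoplus_{i=1}^\ell \cQ_i$ is locally free at each $p \in S$: every summand $\cQ_i$ is then locally free at $p$, so $\sE_1 = \cQ_1$ is locally free at $p$, and $\gr_H(\sG) = \bigoplus_{i=2}^\ell \cQ_i$ is locally free at $p$ as a direct summand of $\gr_H(\sE)$. Applying Remark~\ref{rem:377} to $\sG$ endowed with the Jordan--H\"older filtration from part (3), I conclude that $\sG$ itself is locally free at $p$, which finishes the claim.

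I do not anticipate a genuine obstacle here; the statement is a bookkeeping lemma collecting precisely those properties of the pieces of a Jordan--H\"older filtration that are required to run the induction on $\ell$ in Proposition~\ref{prp:pullback ss}. The only step that needs a moment's thought is the identification of $\gr_H(\sG) = \bigoplus_{i \geq 2} \cQ_i$ as a direct summand of $\gr_H(\sE)$, which passes local freeness from $\gr_H(\sE)$ to $\gr_H(\sG)$ and then, via Remark~\ref{rem:377}, to $\sG$.
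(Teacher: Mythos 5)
Your proposal is correct and follows essentially the same route as the paper: parts (1)--(3) via the induced filtration $\{\sE_i/\sE_1\}$ and iterated application of Lemma~\ref{lem:ext ss}, and part (4) via the decomposition $\gr_H(\sE) = \sE_1 \oplus \gr_H(\sG)$ together with Remark~\ref{rem:377}. The only difference is that you spell out the (standard, and correct) argument that all Jordan--H\"older factors of a semistable sheaf have slope $\mu$, which the paper treats as immediate from the definitions.
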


\begin{proof}
Claim~(\ref{clm:JHF}.\ref{itm:JHF.1}) and the first part of~(\ref{clm:JHF}.\ref{itm:JHF.2})
are immediate from the definition of a Jordan--H\"older filtration.
The filtration~\eqref{eqn:JHF E} induces a Jordan--H\"older filtration $\{ \sE_i / \sE_1 \}_{1 \le i \le \ell}$ of $\sG$ which exhibits $\sG$ as a successive extension of $H$-stable sheaves of slope $\mu$, hence $\sG$ is $H$-semistable of slope $\mu$ by Lemma~\ref{lem:ext ss}.
As the filtration $\{ \sE_i / \sE_1 \}$ has length $\ell - 1$, Claim~(\ref{clm:JHF}.\ref{itm:JHF.3}) is obvious. Finally, we have
\[ \gr_H(\sE) = \gr_H(\sG) \oplus \sE_1 \]
by definition, so local freeness of $\gr_H(\sE)$ at $p \in S$ implies that the two summands on the right-hand side enjoy the same property.
As already mentioned in Remark~\ref{rem:377}, the sheaf $\sG$ is locally free wherever $\gr_H(\sG)$ is locally free.
\end{proof}

Pulling back sequence~\eqref{eqn:E_1} to $\wt X$, we obtain
\begin{sequation} \label{eqn:f^* E_1}
0 \lto f^* \sE_1 \lto f^* \sE \lto f^* \sG \lto 0.
\end{sequation}%
By Lemma~\ref{lem:pullback ss}, the sheaves $f^* \sE_1$ and $f^* \sG$ are $f^* H$-semistable. Hence by the inductive assumption, they are also $H_\eps$-semistable for sufficiently small $\eps > 0$. Furthermore, as $\sE_1$ is locally free at each $p \in S$, we have
\[ \mu_\eps(f^* \sE_1) = \mu_0(f^* \sE_1) = \mu_H(\sE_1) = \mu \]
by Lemma~\ref{lem:mu eps}. By the same reasoning, $\mu_\eps(f^* \sG) = \mu$. Sequence~\eqref{eqn:f^* E_1} thus exhibits $f^* \sE$ as an extension of $H_\eps$-semistable sheaves of the same slope. By Lemma~\ref{lem:ext ss}, we obtain that $f^* \sE$ itself is $H_\eps$-semistable, which was to be shown. \qed

\section{Semistability in families} \label{sec:ss fam}

Let $f\!: X \to B$ be a flat projective morphism with connected fibres such that the general fibre is a smooth curve.
Let $\sE\tw\delta.$ be a \Q-twisted vector bundle on $X$. For $t \in B$, we set $X_t := f^{-1}(t)$ and we let $\sE\tw\delta._t$ be the restriction of $\sE\tw\delta.$ to $X_t$.
In this section, we prove the following result.

\begin{prp}[Semistability in families] \label{prp:ss families}
Assume that for some $t_0 \in B$, the fibre $X_{t_0}$ is reduced and irreducible with normalization $\nu\!: \wt X_{t_0} \to X_{t_0}$, and that $\nu^*(\sE_{t_0})$ is semistable. Then the bundle $\sE_t$ is semistable for general $t \in B$.
\end{prp}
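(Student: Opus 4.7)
My plan is to use the numerical criterion for semistability (Proposition~\ref{prp:ss crit}) to translate both the hypothesis and the conclusion into nefness statements, and then establish an openness-of-nefness principle for fibres of a flat projective family. By Proposition~\ref{prp:ss crit}, semistability of $\nu^*(\sE_{t_0})$ is equivalent to nefness of $\nu^*(\sE_{t_0})\norm = \nu^*((\sE_{t_0})\norm)$, where I use that normalization commutes with pullback. Since $\nu\!: \wt X_{t_0} \to X_{t_0}$ is surjective, Lemma~\ref{lem:nef pullback} then yields that $(\sE_{t_0})\norm$ is itself nef on $X_{t_0}$. Viewing $\sE\norm$ as a \Q-twisted bundle on the total space $X$, this says that its restriction to the fibre $X_{t_0}$ is nef.

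Now let $\pi\!: Y := \P(\sE) \to X$ be the projective bundle, set $r := \rk \sE$, and consider the class
\[ \eta := \xi_\sE - \frac{1}{r}\,\pi^* c_1(\sE) \in N^1(Y)_\Q. \]
By the definition of nefness for \Q-twisted bundles, nefness of $(\sE_t)\norm$ on $X_t$ is equivalent to nefness of $\eta|_{Y_t}$ on $Y_t$, where $g\!: Y \to X \to B$ is the composed (flat projective) morphism and $Y_t := g^{-1}(t)$. Since for general $t$ the fibre $X_t$ is a smooth projective curve, Proposition~\ref{prp:ss crit} will give semistability of $\sE_t$ as soon as $\eta|_{Y_t}$ is nef for general $t$. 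The problem thus reduces to the following openness principle: \emph{given a flat projective morphism $g\!: Y \to B$ and a class $\eta \in N^1(Y)_\Q$ whose restriction to $Y_{t_0}$ is nef, $\eta|_{Y_t}$ is nef for general $t \in B$.}

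To establish this, I would parametrize the irreducible curves contained in fibres of $g$ by the components of the relative Hilbert scheme of $Y/B$ (with respect to some $g$-ample polarization), which forms a countable disjoint union of quasi-projective $B$-schemes $\mc H_j \to B$. On each $\mc H_j$ the numerical class in $N_1(Y)$ of the parameterized curve is locally constant, so the intersection number $\eta \cdot C$ is constant on $\mc H_j$. Call $\mc H_j$ \emph{bad} if $\eta \cdot C < 0$ there. By Chevalley's theorem the image of each bad component in $B$ is constructible, and since $\eta|_{Y_{t_0}}$ is nef, $t_0$ lies in none of these images; hence no bad image contains a Zariski-dense open subset of $B$, and each is thus contained in a proper closed subset. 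For any $t$ outside the resulting countable union of proper closed subsets---the correct notion of ``general'' over the uncountable field $\C$---the class $\eta|_{Y_t}$ is nef, and therefore $\sE_t$ is semistable by Proposition~\ref{prp:ss crit}.

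The main obstacle is a rigorous treatment of the openness step: one must assemble the relative Hilbert scheme as a countable disjoint union of finite-type components, verify that the numerical class of a parameterized curve is locally constant on each component (so that intersection with $\eta$ is truly constant there), and combine Chevalley constructibility with the uncountability of $\C$ to promote ``$t_0$ avoids every bad image'' into the desired genericity of good fibres.
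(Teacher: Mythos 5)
Your first half matches the paper's argument exactly: Proposition~\ref{prp:ss crit} plus Lemma~\ref{lem:nef pullback} to get nefness of $(\sE\norm)_{t_0}$, then reduction of the fibrewise nefness question to a single class $\eta$ on $\P(\sE)$. The problems are in your openness step and in what it delivers. The inference ``the image of a bad Hilbert-scheme component is constructible and does not contain $t_0$, hence it contains no dense open subset and is therefore contained in a proper closed subset'' is false as stated: the constructible set $B \setminus \{t_0\}$ contains a dense open subset yet misses $t_0$, and its closure is all of $B$. What actually saves the argument is that the components of the relative Hilbert scheme of the \emph{projective} morphism $g$ are proper over $B$, so their images are \emph{closed}; a closed subset missing $t_0$ is a proper closed subset. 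You need properness here, not Chevalley. (The paper sidesteps this by quoting~\cite[Prp.~1.4.14]{Laz04a} for the line-bundle case, which is precisely this openness-of-nefness statement; that is its Lemma~\ref{lem:nefness families}.)

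More importantly, even after repairing that step you obtain nefness of $\eta|_{Y_t}$, hence semistability of $\sE_t$, only for \emph{very general} $t$, i.e.\ outside a countable union of proper closed subsets. That is not the same as ``general'' in the sense of a dense open subset, which is what the Proposition asserts and what the proof of Theorem~\ref{thm:MR} uses downstream: the complement of a countable union of proper closed subsets need not contain any dense open set. The paper closes exactly this gap with one further step that your proposal omits: choose a very general $t_1$ at which $X_{t_1}$ is moreover a smooth curve, conclude that $\sE_{t_1}$ is semistable via Proposition~\ref{prp:ss crit}, and then invoke the openness of the locus $\{\, t \in B \mid \sE_t \text{ is semistable}\,\}$ from~\cite[Prp.~2.3.1]{HL97} to upgrade ``very general'' to ``general''. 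Without that final appeal to openness of semistability in families, your argument proves a strictly weaker statement than the one claimed.
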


\begin{lem}[Weak openness of nefness] \label{lem:nefness families}
Assume that for some $t_0 \in B$, the restriction $\sE\tw\delta._{t_0}$ is nef. Then $\sE\tw\delta._t$ is nef for very general $t \in B$, i.e.~for $t$ outside a countable union $\bigcup B_i$ of proper subvarieties $B_i \subsetneq B$.
\end{lem}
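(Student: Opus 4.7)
The plan is to translate nefness of $\sE\tw\delta._t$ into a positivity condition for a single numerical class on the total space of the relative projective bundle, and then apply a countable-generic openness argument based on the relative Hilbert scheme.

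Concretely, I would form $\pi\!: \P(\sE) \to X \to B$ and set $\alpha := \xi_\sE + \pi^* \delta \in N^1(\P(\sE))_\Q$. By the definition of nefness for $\Q$-twisted bundles, $\sE\tw\delta._t$ is nef precisely when $\alpha|_{\P(\sE_t)}$ is nef on the fibre $\P(\sE_t)$ of the composed map $\P(\sE) \to B$, which in turn amounts to the inequality $\alpha \cdot C \ge 0$ for every integral curve $C$ contained in that fibre.

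I would then introduce the relative Hilbert scheme $\mathcal H := \mathrm{Hilb}(\P(\sE)/B)$, which decomposes as a countable disjoint union $\coprod_{i} \mathcal H_i$ of connected components, each projective over $B$. Flatness of the universal family and the continuity of intersection numbers in flat families yield a rational number $d_i := \alpha \cdot C_s$ that is independent of $s \in \mathcal H_i$. Since each $\mathcal H_i \to B$ is proper, the image $B_i \subseteq B$ is closed, and by the first step the non-nef locus in $B$ equals $\bigcup_{i : d_i < 0} B_i$. The hypothesis that $\sE\tw\delta._{t_0}$ is nef says precisely that $t_0 \notin B_i$ for every $i$ with $d_i < 0$, which forces each such $B_i$ to be a \emph{proper} closed subset of the irreducible component of $B$ containing $t_0$. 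The resulting countable union of proper subvarieties is then exactly the exceptional set required by the lemma.

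The main point that requires care is the projectivity of each component $\mathcal H_i$ over $B$: were the image $B_i$ only constructible, it could conceivably be dense in $B$ while still avoiding $t_0$, in which case a very general $t$ would give a curve $C \subset \P(\sE_t)$ with $\alpha \cdot C = d_i < 0$, destroying the argument. Fortunately, projectivity of the Hilbert scheme components is automatic from Grothendieck's construction as soon as $\P(\sE) \to B$ is a projective morphism, so this potential obstacle disappears.
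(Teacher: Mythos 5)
Your argument is correct and follows essentially the same route as the paper: both proofs first reduce nefness of the $\Q$-twisted bundle on the fibres to nefness of the single class $\xi_\sE + \pi^*\delta$ on the fibres of $\P(\sE) \to B$. The only difference is that where the paper then simply cites \cite[Prp.~1.4.14]{Laz04a} for the line-bundle case, you unfold the proof of that cited result (the countable decomposition of the relative Hilbert scheme into components proper over $B$ with constant intersection numbers), which is a correct but standard elaboration rather than a genuinely different approach.
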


\begin{proof}
Consider the projectivized bundle $\pi\!: \P_X(\sE) \to X$, and let $h = f \circ \pi\!: \P_X(\sE) \to B$ be the induced map. For any $t \in B$, the bundle $\sE\tw\delta._t$ is nef if and only if $(\xi_\sE + \pi^* \delta)|_{h^{-1}(t)}$ is nef. Thus the statement reduces to the case of line bundles, which is~\cite[Prp.~1.4.14]{Laz04a}.
\end{proof}

\begin{rem}
It is not known whether the locus of fibres where a given line bundle fails to be nef can really be an infinite union of subvarieties not contained in each other.
Moriwaki~\cite[Exm.~7]{Mor92} gave an example of such behavior in characteristic $p > 0$. Lesieutre~\cite[Thm.~1.2]{Les14} showed that this is also possible over the complex numbers if one allows $\R$-divisors instead of line bundles.

Over a countable field of positive characteristic, it may happen that a line bundle is nef over the generic geometric point, but not nef over any closed geometric point~\cite[Exm.~5.3]{Lan13}.
By~\cite[Sec.~8]{Lan15}, the same phenomenon can also occur in mixed characteristic, e.g.~over $\Spec \Z[1/N]$.
\end{rem}

\begin{proof}[Proof of Proposition~\ref{prp:ss families}]
Since $\nu^* \sE_{t_0}$ is semistable, $(\nu^* \sE_{t_0})\norm$ is nef by Proposition~\ref{prp:ss crit}. But
\[ (\nu^* \sE_{t_0})\norm = \nu^* \big( (\sE\norm)_{t_0} \big), \]
hence by Lemma~\ref{lem:nef pullback}, also $(\sE\norm)_{t_0}$ is nef. By Proposition~\ref{lem:nefness families}, $(\sE\norm)_t = (\sE_t)\norm$ is nef for very general $t \in B$. Choose such a $t_1 \in B$ where additionally $X_{t_1}$ is a smooth curve. Then $\sE_{t_1}$ is semistable by Proposition~\ref{prp:ss crit} again. The set $\{ t \in B \;|\; \sE_t \text{ is semistable} \}$ is open~\cite[Prp.~2.3.1]{HL97}, so the claim follows.
\end{proof}

\section{Proof of Theorem~\ref{thm:MR}}

Let $f\!: \wt X \to X$ be the blowup of $X$ in $S$. As before, for any point $p \in S$ let $E_p \subset \wt X$ be the exceptional divisor over $p$, and let $E = \sum E_p$ be the total exceptional divisor.
By~\cite[Ch.~II, Ex.~7.14.b)]{Har77} and Proposition~\ref{prp:pullback ss}, for a sufficiently large integer $m \gg 0$ the divisor $\wt H_m := f^*(mH) - E$ is very ample on $\wt X$ and $f^* \sE$ is $\wt H_m$-semistable.
By Flenner's Mehta--Ramanathan theorem~\cite[Thm.~1.2]{Fle84}, for $k \in \N$ sufficiently large the sheaf $f^* \sE|_{\wt C}$ is semistable, where
\[ \wt C = H_1 \cap \cdots \cap H_{n-1}, \quad H_i \in |k \wt H_m|, \]
is a general complete intersection curve. The image curve $C = f(\wt C)$ is then a (singular) curve arising as a complete intersection of divisors in $|kmH|$.

The curve $\wt C$ meets, but is not contained in, each of the divisors $E_p$. By codimension reasons it misses the singular locus of $\wt X$ and the locus where $f^* \sE$ is not locally free.
Hence $C$ passes through $S$ and is contained in the set $X^\circ$ defined in Section~\ref{sec:cic}. Using notation from that Section, we thus obtain a point $s \in U_\m$ with $q_\m^{-1}(s) = C$, where $\m = (km, \dots, km)$.
The map $f|_{\wt C}\!: \wt C \to C$ is the normalization of $C$, and $(f|_{\wt C})^* (\sE|_C) = f^* \sE|_{\wt C}$ is semistable.
Note that by Proposition~\ref{prp:qm}.\ref{itm:qm.2}, the set $U_\m'$ is nonempty, i.e.~the general member of the family of curves $q_\m^{-1}(U_\m) \to U_\m$ is smooth.
Theorem~\ref{thm:MR} now follows by applying Proposition~\ref{prp:ss families} to the family $q_\m^{-1}(U_\m) \to U_\m$ and the sheaf $p_\m^* \sE$ on $q_\m^{-1}(U_\m)$.

\section{Proof of Corollary~\ref{cor:gsp}}

Consider the Harder--Narasimhan (HN) filtration of $\sT_X$ with respect to the polarization $H$,
\begin{sequation} \label{eqn:HNF}
0 = \sF_0 \subset \sF_1 \subset \dots \subset \sF_r = \sT_X.
\end{sequation}%
The defining property of the HN filtration is that the quotients $\sQ_i = \sF_i/\sF_{i-1}$ are torsion-free and $H$-semistable, and $\mu_H(\sQ_i) > \mu_H(\sQ_{i+1})$ for all indices $i$.
For existence and uniqueness of the HN filtration, see~\cite[Thm.~1.3.4]{HL97}.

As $\gr_H(\sT_X) = \bigoplus_i \gr_H(\sQ_i)$, local freeness of $\gr_H(\sT_X)$ at each $p \in S$ implies the same property for each $\gr_H(\sQ_i)$. Hence we may apply Theorem~\ref{thm:MR} to each of the finitely many sheaves $\sQ_i$. We obtain numbers $k_0, m \in \N$ such that if $k \ge k_0$ and
\[ C = H_1 \cap \cdots \cap H_{n-1}, \quad H_i \in |kmH|, \]
is a general complete intersection curve passing through $S$, then $C$ is smooth, contained in $X_\reg$, and each $\sQ_i|_C$ is semistable. We will show that this implies nefness of $\Omega_X^1|_C$, ending the proof.

Arguing by contradiction, suppose that $\Omega_X^1|_C$ is not nef. Then there is a locally free quotient $\Omega_X^1|_C \surj \sG$ of negative degree~\cite[Thm.~6.4.15]{Laz04b}.
Dualizing, we get a subbundle $\sF \subset \sT_X|_C$ of positive degree. In particular, the first term $\sF_1^C$ of the HN filtration
\[ 0 = \sF_0^C \subset \sF_1^C \subset \dots \subset \sF_s^C = \sT_X|_C \]
of $\sT_X|_C$ has positive degree.
Now note that by semistability of the $\sQ_i|_C$, the restriction $\{ \sF_i|_C \}$ of filtration~\eqref{eqn:HNF} to $C$ has semistable quotients with strictly decreasing slopes. Hence $\{ \sF_i|_C \}$ is the HN filtration of $\sT_X|_C$, i.e.~$r = s$ and $\sF_i|_C = \sF_i^C$ for all $0 \le i \le r$. So the subsheaf $\sF_1 \subset \sT_X$ satisfies $c_1(\sF_1) \cdot H^{n-1} = (km)^{-(n-1)} \deg \sF_1^C > 0$. By~\cite[Thm.~9.0.2]{Kol92}, $X$ is uniruled, leading to the desired contradiction.

\newcommand{\etalchar}[1]{$^{#1}$}
\providecommand{\bysame}{\leavevmode\hbox to3em{\hrulefill}\thinspace}
\providecommand{\MR}{\relax\ifhmode\unskip\space\fi MR}
% \MRhref is called by the amsart/book/proc definition of \MR.
\providecommand{\MRhref}[2]{%
  \href{http://www.ams.org/mathscinet-getitem?mr=#1}{#2}
}
\providecommand{\href}[2]{#2}

\end{document}